\newcolumntype{C}[1]{>{\centering\let\newline\\\arraybackslash\hspace{0pt}}m{#1}}
\newcolumntype{L}[1]{>{\raggedright\let\newline\\\arraybackslash\hspace{0pt}}m{#1}}
\newcolumntype{R}[1]{>{\raggedleft\let\newline\\\arraybackslash\hspace{0pt}}m{#1}}
\newif\ifshowvc
\newcommand{\ip}[1]{\left \langle #1 \right \rangle}
\DeclareMathOperator{\Span}{span}
\DeclareMathOperator{\Syl}{Syl}
\DeclareMathOperator{\Vor}{Vor}
\DeclareMathOperator{\Res}{Res}
\DeclareMathOperator{\St}{St}
\newcommand{\bG}{\ensuremath{\mathbf{G}}}
\newcommand{\hp}{\ensuremath{\mathfrak{H}}}
\newcommand{\cH}{\ensuremath{\mathcal{H}}}
\newcommand{\cS}{\ensuremath{\mathcal{S}}}
\newcommand{\set}[1]{\left\{#1\right\}}
\newcommand{\OO}{\ensuremath{\mathcal{O}}}
\newcommand{\RR}{\ensuremath{\mathbb{R}}}
\newcommand{\QQ}{\ensuremath{\mathbb{Q}}}
\newcommand{\CC}{\ensuremath{\mathbb{C}}}
\newcommand{\ZZ}{\ensuremath{\mathbb{Z}}}
\theoremstyle{plain}
\newtheorem{theorem}{Theorem}[section]
\newtheorem{proposition}[theorem]{Proposition}
\newtheorem{corollary}[theorem]{Corollary}
\newtheorem{lemma}[theorem]{Lemma}
\theoremstyle{definition}
\newtheorem{definition}[theorem]{Definition}
\theoremstyle{remark}
\newtheorem{remark}[theorem]{Remark}
\DeclareMathOperator{\Aut}{Aut}
\DeclareMathOperator{\SL}{SL}
\DeclareMathOperator{\GL}{GL}
\DeclareMathOperator{\PSL}{PSL}
\DeclareMathOperator{\Stab}{Stab}
\DeclareMathOperator{\rank}{rank}
\DeclareMathOperator{\Tr}{Tr}
\DeclareMathOperator{\cd}{cd}
\DeclareMathOperator{\vcd}{vcd}
\DeclareMathOperator{\Dim}{dim}
\newcommand{\theoremcite}[1]{\emph{\cite{#1}}}
\newcommand{\rftheoremcite}[2]{\emph{\cite[#1]{#2}}}
\def\QuotS#1#2{\leavevmode\kern-.0em\raise.2ex\hbox{$#1$}\kern-.1em/\kern-.1em\lower.25ex\hbox{$#2$}}
\newcommand{\fS}{\mathfrak{S}}
\begin{document}
\author[H. Gangl]{Herbert Gangl}
\address{H. Gangl, Department of Mathematical Sciences, South Road, Durham DH1 3LE, United Kingdom}
\email{herbert.gangl@durham.ac.uk}
\urladdr{\url{http://maths.dur.ac.uk/~dma0hg/}}

\author[P. E. Gunnells]{Paul E. Gunnells}
\address{P. E. Gunnells, Department of Mathematics and Statistics, LGRT 1115L, University of Massachusetts, Amherst, MA 01003, USA}
\email{gunnells@math.umass.edu}
\urladdr{\url{https://www.math.umass.edu/~gunnells/}}

\author[J. Hanke]{Jonathan Hanke}
\address{J. Hanke, One Palmer Square, Suite 441, Princeton, NJ 08542, USA}
\email{jonhanke@gmail.com}
\urladdr{\url{http://www.jonhanke.com}}

\author[A. Sch\"urmann]{Achill Sch\"urmann}
\address{A. Sch\"urmann, Universit\"at Rostock, Institute of Mathematics, 18051 Rostock, Germany}
\email{achill.schuermann@uni-rostock.de}
\urladdr{\url{http://www.geometrie.uni-rostock.de/}}

\author[M. D. Sikiri\'c]{Mathieu Dutour Sikiri\'c}
\address{M. D. Sikiri\'c, Rudjer Boskovi\'c Institute, Bijenicka 54, 10000 Zagreb, Croatia}
\email{mathieu.dutour@gmail.com}
\urladdr{\url{http://drobilica.irb.hr/~mathieu/}}

\author[D. Yasaki]{Dan Yasaki}
\address{D. Yasaki, Department of Mathematics and Statistics, University of North Carolina at Greensboro, Greensboro, NC 27412, USA}
\email{d\_yasaki@uncg.edu}
\urladdr{\url{http://www.uncg.edu/~d_yasaki/}}

\thanks{MDS was partially supported by the Croatian Ministry of
Science, Education and Sport under contract 098-0982705-2707 and by
the Humboldt Foundation.  PG was partially supported by the NSF under
contract DMS 1101640.  JH was partially supported by the NSF under contract DMS-0603976.
The authors thank the American Institute of
Mathematics, where this research was initiated.}

\keywords{Cohomology of arithmetic groups, Voronoi
reduction theory, linear groups over imaginary quadratic fields}

\subjclass[2010]{Primary 11F75; Secondary 11F67, 20J06}

\title{On the cohomology of linear groups over imaginary quadratic fields}
\date{25 November, 2013}

\begin{abstract}
Let $\Gamma$ be the group $\GL_N (\OO_D)$, 
where $\OO_D$ is the ring of integers in the imaginary quadratic field with discriminant $D<0$. 
%Let $F$ be an imaginary quadratic field of discriminant $D<0$, and let
%$\OO = \OO_{D}$ be its ring of integers.  Let $\Gamma$ be the group
%$\GL_N (\OO)$.  
In this paper we investigate the cohomology of
$\Gamma$ for $N=3,4$ and for a selection of discriminants: $D\geq -24$
when $N=3$, and $D=-3,-4$ when $N=4$.  In particular we compute the
integral cohomology of $\Gamma$ up to $p$-power torsion for small
primes $p$.  Our main tool is the polyhedral reduction theory for
$\Gamma$ developed by Ash \cite[Ch.~II]{AMRT_SmoothCompactification}
and Koecher \cite{koecher}.  Our results extend work of Staffeldt
\cite{k3gauss}, who treated the case $n=3$, $D=-4$.  In a sequel
\cite{aim-ktheory} to this paper, we will apply some of these results
to the computations with the $K$-groups $K_{4} (\OO_{D})$, when $D=-3,-4$.

\end{abstract}

\maketitle
\ifshowvc
\let\thefootnote\relax
\footnotetext{Base revision~\GITAbrHash, \GITAuthorDate,
\GITAuthorName.}
\fi

\section{Introduction}

\subsection{} Let $F$ be an imaginary quadratic field of discriminant
$D<0$, let $\OO = \OO_{D}$ be its ring of integers, and let $\Gamma$ be
the group $\GL_{N} (\OO)$.  The homology and cohomology of $\Gamma$
when $N=2$ --- or rather 
its
%$\Gamma$'s 
close cousin the Bianchi group
$\PSL_{2} (\OO)$ --- have been well studied in the literature.  For an
(incomplete) selection of results we refer to
\cite{rahm1,rahm2,berkove,schwermer-vogtmann,vogtmann,sengun,cremona}.
Today we have a good understanding of a wide range of examples, and
one can even compute them for very large discriminants
(cf.~\cite{yasaki}).  For $N>2$, on the other hand, the group $\Gamma$
has not received the same attention.  The first example known to us is
the work of Staffeldt \cite{k3gauss}.  He treated the case $N=3$,
$D=-4$ with the goal of understanding the $3$-torsion in $K_{3}
(\ZZ[\sqrt{-1}])$.  The second example is \cite{coul-herm}, which
investigates the case of the groups $\GL(L)$ for $L$ an $\OO$-lattice
that is not necessarily a free $\OO$-module.  This allows the authors
to compute the Hermite constants of those rings in case $D\geq -10$
and $\rank(L) \leq 3$. Our methods apply as well to the non-free case, and
the corresponding cohomology computations would be useful when
investigating automorphic forms over number fields that are not
principal ideal domains (cf.~\cite[Appendix]{SteinBook} for more about
the connection between cohomology of arithmetic groups and automorphic
forms).  

In this paper we rectify this situation somewhat by beginning the
first systematic computations for higher rank linear groups over
$\OO$.  In particular investigate the cohomology of $\Gamma$ for
$N=3,4$ and for a selection of discriminants: $D\geq -24$ when $N=3$,
and $D=-3,-4$ when $N=4$.  We explicitly compute the polyhedral
reduction domains arising from Voronoi's theory of perfect forms, as
generalized by Ash \cite[Ch.~II]{AMRT_SmoothCompactification} and
Koecher \cite{koecher}.  This allows us to compute the integral
cohomology of $\Gamma$ up to $p$-power torsion for small primes $p$.
In a sequel \cite{aim-ktheory} to this paper, we will apply some of
these results to computations with the $K$-groups $K_{4} (\OO_{D})$,
when $D=-3,-4$.

\subsection{} Here is a guide to the paper (which closely follows the
structure of the first five sections of \cite{PerfFormModGrp}).  In
Section~\ref{sec:polyhedralcone} we recall the explicit reduction
theory we need to build our chain complexes to compute cohomology.  In
Section \ref{sec:cohomologyandhomology} we define the complexes, and
explain the relation between what we compute and the cohomology of
$\Gamma$.  In Section \ref{sec:mass} we describe a ``mass formula''
for the cells in our tessellations that provides a strong %non-trivial
computational check on the correctness of our constructions.  In
Section~\ref{sec:explicit} we give an explicit representative for the
nontrivial class in the top cohomological degree; this construction is
motivated by a similar construction in \cite{PerfFormModGrp,
PEV-HG-CS-announce}.  Finally, in Section~\ref{sec:tables} we give the
results of our computations.

\subsection{Acknowledgments} We thank A.~Ash, P.~Elbaz-Vincent, and
C.~Soul\'e for helpful discussions.  This research, and the research
in the companion papers \cite{aim-voronoi, aim-ktheory} was conducted
as part of a ``SQuaRE'' (Structured Quartet Research Ensemble) at the
American Institute of Mathematics in Palo Alto, California in February
2012.  It is a pleasure to thank AIM and its staff for their support,
without which our collaboration would not have been possible.  Also
our initial computations of cohomology and $K$-groups occurred on the
large shared computers \texttt{parsley} and \texttt{rosemary} at the
University of Georgia mathematics department, and we thank them for
their support in allowing these machines to be used for collaborative
mathematical research projects.

\section{The polyhedral cone}\label{sec:polyhedralcone}
Fix an imaginary quadratic field $F$ of discriminant $D<0$ with ring of
integers $\OO = \OO_D$ 
and define the element
%.  Define $\omega = \omega_D$ as
\[\omega := 
\omega_D :=
\begin{cases}
  \sqrt{D/4} & \text{if $D \equiv 0 \bmod{4}$,}\\
  (1 + \sqrt{D})/2 & \text{if $D \equiv 1 \bmod{4}$,}
\end{cases}
\]
%Then 
so that 
$F = \QQ(\omega)$ and $\OO = \ZZ[\omega]$.  Throughout we fix a complex
embedding $F \hookrightarrow \CC$ and consistently identify $F$ with its image in
$\CC$.  We also extend this identification to vectors and matrices with
coefficients in $F$.

\subsection{Hermitian forms} Let
$\cH^N(\CC)$ denote the $N^2$-dimensional real vector space of $N
\times N$ Hermitian matrices with complex coefficients.  Using the
chosen complex embedding of $F$ we can view $\cH^N(F)$, the Hermitian
matrices with coefficients in $F$, as a subset of $\cH^N(\CC)$.
Moreover this embedding allows us to view $\cH^N(\CC)$ as a
$\QQ$-vector space such that the rational points of $\cH^N(\CC)$ are
exactly $\cH^N(F)$.

Define a map
$q \colon \OO^N \to \cH^N(F)$ 
by the outer product $q(x) = x {x}^{*}$, 
where  ${*}$ denotes conjugate transpose (with conjugation being
the nontrivial complex conjugation automorphism of $F$).
Each $A \in \cH^N(\CC)$ defines a Hermitian
form $A[x]$ on $\CC^N$ by the rule
\begin{equation}
A[x] := x^* A x, \quad \text{for $x \in \CC^N$}. %,
\end{equation}
%where $\cdot^*$ is complex conjugate transpose.  
Define the
non-degenerate bilinear pairing
\[\ip{\cdot, \cdot} \colon \cH^N(\CC) \times \cH^N(\CC) \to \CC\] 
by $\ip{A,B} := \Tr(AB)$.  For $x \in \OO^N$ (identified with its image
in $\CC^N$) one can easily verify that  
\begin{equation}\label{eq:traceform}
A[x] = \Tr(Aq(x)) = \ip{A, q(x)}.
\end{equation}

Let $C_N \subset \cH^N(\CC)$ denote the cone of positive definite
Hermitian matrices.

\begin{definition}
  For $A \in C_N$, we define the \emph{minimum of $A$} as %, denoted $m_D(A) = m(A)$ is  
\[m(A) := m_D(A) := \inf_{x \in \OO^N \setminus \set{0}} A[x].\]
Note that $m (A)>0$ since $A$ is positive definite.  A vector $v \in
\OO^N$ is called a \emph{minimal vector of $A$} if $A[v] = m(A)$.  
%Denote 
We denote the
set of minimal vectors of $A$ by $M(A)$.
\end{definition}

It should be emphasized that these notions depend on the fixed choice
of the imaginary quadratic field $F$.  Since $q(\OO^N)$ is discrete in
$\cH^N(\CC)$
and the level sets $q(x) = C$ are compact, 
the minimum for each $A$ is attained by only finitely many
minimal vectors.

From \eqref{eq:traceform}, we see that each vector $v \in \OO^N$
gives rise to a linear functional on $\cH^N(\CC)$ defined by $q(v)$. 
\begin{definition}\label{def:perfect}
We say 
%that 
a Hermitian form $A \in C_N$ is a \emph{perfect Hermitian
form over $F$} if
\[\Span_\RR\set{q(v) \mid v \in M(A)} = \cH^N(\CC).\]
\end{definition}

From Definition \ref{def:perfect} it is clear that a form is perfect
if and only if it is uniquely determined by its minimum and its
minimal vectors.  Equivalently, a form $A$ is perfect when $M(A)$
determines $A$ up to a positive real scalar.  It is convenient to
normalize a perfect form by requiring that $m (A)=1$, and we will do
so throughout this paper.  A priori there is no reason to expect that
%such an $A$ is actually 
every perfect form (up to rescaling) can be realized as 
an $F$-rational point in $\cH^{n} (\CC)$; indeed,
when one generalizes these concepts to general number fields this is
too much to expect (cf.~\cite{CubicField}).  However 
%, in our case we can assume this:
for imaginary quadratic fields this rationality property does hold:

\begin{theorem}\rftheoremcite{Theorem~3.2}{okuda-yano}
Suppose $F$ is a CM field.  Then if $A \in C_N$ is a perfect Hermitian
form over $F$ such that $m(A) = 1$, we have $A \in \cH^N(F)$.
\end{theorem}

We now construct a partial compactification of the cone $C_{N}$:
\begin{definition}
A matrix $A \in \cH^N(\CC)$ is said to have an \emph{$F$-rational
kernel} when the kernel of $A$ is spanned by vectors in $F^N \subset
\CC^N$.  Let $C^*_N \subset \cH^N (\CC)$ denote the subset of nonzero
positive semi-definite Hermitian forms with $F$-rational kernel.
\end{definition}

Let $\bG$ be the reductive group over $\QQ$ given by the restriction
of scalars $\Res_{F/\QQ}(\GL_N)$.  Thus $\bG(\QQ) = \GL_N(F)$ and
$\bG(\ZZ) = \GL_N(\OO)$.  The group $\bG(\RR) = \GL_N(\CC)$ acts on
$C^*_N$ on the left by
\[g \cdot A = gAg^*,\] 
where $g \in \GL_N(\CC)$ and $A \in C^*_N$; one can easily verify that
this action preserves $C_N$.  Let $H = R_d\bG(\RR)^0$ be the identity
component of the group of real points of the split radical of $\bG$.
Then $H \simeq \RR_+$, and as a subgroup of $\bG (\RR)$ acts on
$C^*_N$ by positive real homotheties.  Voronoi's work \cite{VoronoiI},
generalized by Ash \cite[Ch.~II]{AMRT_SmoothCompactification} and
Koecher \cite{koecher} shows that there are only finitely many perfect
Hermitian forms over $F$ modulo the action of $\GL_N(\OO)$ and $H$.

Let $X^*_N$ denote the quotient $X^*_N = H\backslash C_N^*$, and let
$\pi\colon C^*_N \to X^*_N$ denote the projection.  Then $X_N =
\pi(C_N)$ can be identified with the global Riemannian symmetric space
for the reductive group $H\backslash\GL_N(\CC)$.
\begin{remark}\label{rem:orientable}
  The action of $\bG (\RR) = \GL_{N} (\CC)$ on $\cH^N(\CC)$ gives a
representation $\rho\colon \GL_N(\CC) \to \GL_{N^2}(\RR)$.  Any
element $g$ acts on the orientation of $\cH^N(\CC)$ via the sign of
$\det(\rho(g))$.  Since $\GL_N(\CC)$ is connected, $\GL_N(\CC )$ and
its subgroup $\GL_{N} (F)$ act on $\cH^N(\CC)$ via orientation
preserving automorphisms.
\end{remark}

\subsection{Two cell complexes}\label{subsec:cell-complexes}
Let $M$ be a finite subset of $\OO^N \setminus \set{0}$.  The
\emph{perfect cone} of $M$ is the set of nonzero matrices of the form
$\sum_{v \in M} \lambda_v q(v)$, where $\lambda_v\in \RR_{\geq 0}$; by
abuse of language we also call its image by $\pi$ in $X^*_N$ a perfect
cone.  For a perfect form $A$, let $\sigma(A) \subset X^*_N$ be the
perfect cone of $M(A)$.  One can show
\cite{AMRT_SmoothCompactification, koecher} that the cells $\sigma(A)$
and their intersections, as $A$ runs over equivalence classes of
perfect forms, define a $\GL_{N} (\OO)$-invariant cell decomposition
of $X^*_N$.  In particular, for a perfect form $A \in C_N$ and an
element $\gamma \in \GL_N(\OO)$, we have
\[\gamma \cdot \sigma(A) = \text{perfect cone of $\set{\gamma v
  \mid v \in M(A)}$} = \sigma((\gamma ^*)^{-1}A \gamma ).\] 
Endow $X^*_N$ with the CW-topology \cite[Appendix]{hatcher}.

If $\tau$ is a closed cell in $X^*_N$ and $A$ is a perfect form with
$\tau \subset \sigma(A)$, we let $M(\tau)$ denote the set of vectors
$v \in M(A)$ such that $q(v) \in \tau$.  The set $M (\tau)$ is
independent of the (possible) choice of $A$.  Then $\tau$ is the image
in $X^{*}_{N}$ of the cone $C_{\tau}$ generated by $\{q (v)\mid v \in
M(\tau) \}$.  For any two closed cells $\tau$ and $\tau'$ in $X^*_N$,
we have $M(\tau) \cap M(\tau') = M(\tau \cap \tau')$.

Let $\tilde{\Sigma} \subset C^*_N$ be the (infinite) union of all
cones $C_\sigma$ such that $\sigma \in X_N^*$ has nontrivial
intersection with $X_{N}$.  One can verify that the stabilizer of
$C_\sigma$ in $\GL_{N} (\OO)$ is equal to the stabilizer of $\sigma$.
By abuse of notation, we write $M(C_\sigma)$ as $M(\sigma)$.

The collection of cones $\tilde{\Sigma }$ was used by Ash \cite{ash77,
ash-small} to construct the \emph{well-rounded retract}, a
contractible $N^2 - N$ dimensional cell complex $W$ on which
$\GL_N(\OO)$ acts cellularly with finite stabilizers of cells.  More
precisely, a nonzero finite set $M \subset
 \OO^{N}$ is called \emph{well-rounded} if the $\CC$-span
of $M$ is $\CC^N$.  For a well-rounded subset $M$, let $\sigma(M)$
denote the set of forms $A \in C_N$ with $M(A) = M$ and $m(A) = 1$.
It is easy to prove that if $\sigma(M)$ is non-empty then it is convex
and thus topologically a cell.  The well-rounded retract is then
defined to be
\[W = \bigcup_{\text{$M$ well-rounded}} \sigma(M).\]

The space $W$ is dual to the decomposition of $X_N$ in a sense made
precise in Theorem~\ref{theorem:duality} below.  For instance when $N = 2$
and $F = \QQ$, $X$ can be identified with the upper
half-plane $\hp = \{x+iy\mid y>0 \}$.  The Voronoi tessellation is the tiling of $\hp$ by
with the $\SL_{2} (\ZZ)$-translates of the ideal geodesic triangle
with vertices $\{0,1,\infty \}$.  The well-rounded retract is the dual
infinite trivalent tree (see Figure~\ref{fig:tess-spine}).

\begin{figure}
\includegraphics[scale=0.6]{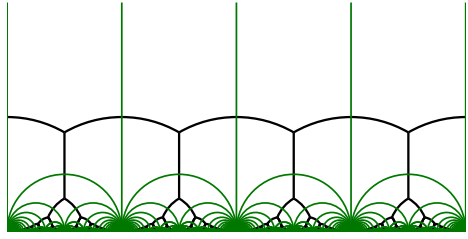}  
  \caption{Voronoi tessellation of $\hp$ shown in green, with its dual the
    well-rounded retract shown in black.} \label{fig:tess-spine}
\end{figure}

\begin{lemma}\label{lem:wellrounded-positivedefinite}
If $C_\sigma \in \tilde{\Sigma}$, then $M(\sigma)$ is well-rounded.
\end{lemma}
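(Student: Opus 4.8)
The plan is to unwind the hypothesis $C_\sigma \in \tilde{\Sigma}$, which by the definition of $\tilde{\Sigma}$ says precisely that the cell $\sigma$ meets $X_N = \pi(C_N)$ nontrivially. First I would pick a point of $\sigma \cap X_N$ and lift it under $\pi$: this produces a positive definite form whose image lies on $\sigma$, so a positive multiple of it lies in the cone $C_\sigma \subset C^*_N$. Since $H \simeq \RR_+$ acts on $C^*_N$ by positive homotheties and positive definiteness is scale-invariant, I may normalize to an honest representative $A_0 \in C_\sigma \cap C_N$. As $C_\sigma$ is by construction the cone generated by $\set{q(v) \mid v \in M(\sigma)}$, this lets me write $A_0 = \sum_{v \in M(\sigma)} \lambda_v\, q(v)$ with every $\lambda_v \geq 0$.

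I would then argue by contradiction. Suppose $M(\sigma)$ fails to be well-rounded, so that the $\CC$-span of $M(\sigma)$ is a proper subspace of $\CC^N$. Its Hermitian-orthogonal complement is then nonzero, and I may choose $w \in \CC^N \setminus \set{0}$ with $w^* v = 0$ for every $v \in M(\sigma)$. Evaluating $A_0$ at $w$, substituting the expansion above, and using $q(v) = v v^*$, one finds $A_0[w] = w^* A_0 w = \sum_{v \in M(\sigma)} \lambda_v\,(w^* v)(v^* w) = \sum_{v \in M(\sigma)} \lambda_v\,|w^* v|^2 = 0$. Since $w \neq 0$ and $A_0$ is positive definite, this contradicts $A_0[w] > 0$. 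Hence the $\CC$-span of $M(\sigma)$ is all of $\CC^N$; as $M(\sigma)$ is a nonempty finite subset of $\OO^N \setminus \set{0}$ (nonempty because $A_0 \in C_\sigma$ is nonzero), it is well-rounded.

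The step I expect to require the most care is the first one: checking that the set-theoretic hypothesis ``$\sigma$ meets $X_N$'' genuinely delivers a positive definite $A_0$ written as a nonnegative combination of the generators $q(v)$, $v \in M(\sigma)$. This hinges on reading the definitions consistently --- the intersection $\sigma \cap X_N$ is taken inside $X^*_N$, the cone $C_\sigma \subset C^*_N$ is spanned by $\set{q(v) \mid v \in M(\sigma)}$, and the $H$-action permits normalizing the chosen lift into $C_N$. Once this is pinned down, the remainder is the short positivity computation above, which is essentially the observation that a positive definite form cannot be a nonnegative combination of the rank-one forms $v v^*$ when all the vectors $v$ are confined to a proper subspace of $\CC^N$.
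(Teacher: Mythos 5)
Your proof is correct and follows essentially the same route as the paper's: assume $M(\sigma)$ is not well-rounded, pick $w\neq 0$ orthogonal to all of $M(\sigma)$, expand an element of $C_\sigma$ as a nonnegative combination of the rank-one forms $q(v)$, and conclude $\theta[w]=0$, contradicting positive definiteness. The only (welcome) difference is that you explicitly lift a point of $\sigma\cap X_N$ and normalize by the $H$-action to produce the positive definite representative $A_0\in C_\sigma\cap C_N$, a step the paper leaves implicit in the phrase ``contradicting the assumption that $C_\sigma\in\tilde{\Sigma}$.''
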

\begin{proof}
This is proved by McConnell in \cite[Theorem~2.11]{mcconnell-projgeom}
when $F = \QQ$, and we mimic his proof to yield the analogous result
for imaginary quadratic fields.  Let $M = M(C_\sigma)$ denote the
spanning vectors of $C_\sigma$.  Suppose $M$ is not well-rounded.
Then $M$ does not span $\CC^N$, and so there exists a non-zero vector
$w \in \CC^N$ such that $v w^* = 0$ for all $v \in M$.  Then each
$\theta \in C_\sigma$ can be written as a non-zero Hermitian form
$\theta = \sum_{v \in M}a_vq(v)$, where $a_v \geq 0$.  It follows that
\[\theta[w] = \sum_{v \in M}a_v \ip{q(v), q(w)} = \sum_{v \in M}a_v
|vw^*|^2 = 0, 
 \]
where $|\cdot|$ is the usual norm on $\CC^N$.  In particular, $\theta$
is not positive definite, contradicting the
assumption that $C_\sigma \in \tilde{\Sigma}$.
\end{proof}

\begin{theorem}\label{theorem:duality}
Let $M\subset \OO^{N}$ be well-rounded and let $\sigma(M) \in W$.
Then there is a unique cone $C_\sigma \in \tilde{\Sigma}$ such that
$M(C_\sigma) = M$.  The map $\sigma(M) \mapsto C_\sigma$ is a
canonical bijection $W \to \tilde{\Sigma}$ that is
inclusion-reversing on the face relations.
\end{theorem}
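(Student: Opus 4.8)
The plan is to write the bijection down explicitly and then verify its properties. Set $\Phi(\sigma(M)):=C_M$, where $C_M:=\cone\{q(v)\mid v\in M\}\subset C^*_N$, and set $\Psi(C_\sigma):=\sigma(M(\sigma))$. The claimed uniqueness is immediate: by construction every $C_\sigma\in\tilde{\Sigma}$ equals $\cone\{q(v)\mid v\in M(\sigma)\}$, so the requirement $M(\sigma)=M$ forces $C_\sigma=C_M$. Granting that (i) $C_M\in\tilde{\Sigma}$ with $M(C_M)=M$ for every $\sigma(M)\in W$, and (ii) $\sigma(M(\sigma))\neq\emptyset$ for every $C_\sigma\in\tilde{\Sigma}$, the maps $\Phi$ and $\Psi$ are mutually inverse: $\Phi\circ\Psi=\mathrm{id}$ holds by the defining formula for $C_\sigma$, and $\Psi\circ\Phi=\mathrm{id}$ by (i). So the work is concentrated in (i), (ii), and the face-reversal.

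For (i) I would pick $A\in\sigma(M)$, so that $M(A)=M$ and $m(A)=1$, and invoke Voronoi's description (cf.~\cite{AMRT_SmoothCompactification, koecher}) of the perfect cone decomposition as the set of cones over the faces of the convex hull $\mathcal{P}:=\conv\{q(v)\mid v\in\OO^N\setminus\{0\}\}$. By \eqref{eq:traceform} every $q(v)$ satisfies $\ip{A,q(v)}=A[v]\geq 1$, so $\{B\mid\ip{A,B}=1\}$ is a supporting hyperplane of $\mathcal{P}$ whose contact face is $\conv\{q(v)\mid A[v]=1\}=\conv\{q(v)\mid v\in M\}$; hence $C_M$ is a genuine cell of the decomposition. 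The lattice vectors with $q(v)$ on this cell are precisely those with $A[v]=m(A)$, and since minimal vectors are primitive, no further lattice multiple of a $v\in M$ can occur, giving $M(C_M)=M$. Finally, choosing a $\CC$-basis $v_1,\dots,v_N\subset M$ (possible as $M$ is well-rounded), the form $\sum_i q(v_i)=\sum_i v_iv_i^*$ is positive definite, so $C_M\cap C_N\neq\emptyset$ and $C_M\in\tilde{\Sigma}$.

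For (ii) let $C_\sigma\in\tilde{\Sigma}$ and set $M:=M(\sigma)$, which is well-rounded by Lemma~\ref{lem:wellrounded-positivedefinite}. Realizing $C_\sigma$ as a face of a perfect cone $\sigma(A_0)$ gives $M\subseteq M(A_0)$ and $m(A_0)=1$. If $M=M(A_0)$ then $A_0\in\sigma(M)$; otherwise I would perturb. As $\conv\{q(v)\mid v\in M\}$ is a proper exposed face of $\conv\{q(u)\mid u\in M(A_0)\}$, there is a form $A_1$ with $\ip{A_1,q(v)}=0$ for $v\in M$ and $\ip{A_1,q(w)}>0$ for the finitely many $w\in M(A_0)\setminus M$. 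For small $s>0$ the form $A_s:=A_0+sA_1$ is positive definite (a small perturbation of $A_0$), satisfies $A_s[v]=1$ on $M$ and $A_s[w]>1$ off $M$, and acquires no new minimal vectors by discreteness of $q(\OO^N)$; thus $M(A_s)=M$ and $\sigma(M)\neq\emptyset$.

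The face-reversal is then formal. In $W$ the cell $\sigma(M')$ is a face of $\sigma(M)$ exactly when $M\subseteq M'$: a point of $\overline{\sigma(M)}$ is a limit of forms with minimal set $M$ and so retains $M$ among its minimal vectors, while enlarging the minimal set specializes the cell. On the other side $M\subseteq M'$ is equivalent to $C_M$ being a face of $C_{M'}$, with supporting functional any $A\in\sigma(M)$. Hence $\sigma(M')\leq\sigma(M)$ iff $C_M\leq C_{M'}$, so $\Phi$ reverses inclusions. I expect the crux to be step (i)---identifying $C_M$ as an actual cell of the Voronoi decomposition with minimal-vector set exactly $M$, which is where the convex-hull description and the primitivity of minimal vectors are essential---followed by the perturbation in (ii) that produces the nonempty dual cell.
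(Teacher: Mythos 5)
Your proposal is correct, but it is a hybrid: one half coincides with the paper's proof and the other half takes a genuinely different route. Your step (ii) --- realizing $C_\sigma$ as a face of a perfect cone $\sigma(A_0)$, choosing a functional $A_1$ vanishing on $M$ and positive on $M(A_0)\setminus M$, and perturbing $A_s=A_0+sA_1$ --- is exactly the paper's argument for that direction (there the functional is called $H$ and the perturbation is $B=A+\rho H$). Where you diverge is the converse direction (your step (i)): given $A\in\sigma(M)$, the paper runs the generalized Voronoi algorithm of the companion paper \cite{aim-voronoi} to produce a perfect form $A'$ with $M\subseteq M(A')$, and then exhibits $\cone\set{q(v)\mid v\in M}$ as a face of $\sigma(A')$ via the supporting functional $A-A'$; you instead invoke the global description of the decomposition as the cones over the faces of the polyhedron $\conv\set{q(v)\mid v\in\OO^N\setminus\{0\}}$ and read off $C_M$ as the cone over the contact face of the hyperplane $\ip{A,\cdot}=1$. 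Both routes outsource the same underlying fact --- that $C_M$ lies in some perfect cone --- but to different sources: you to the convex-hull structure theorem in \cite{AMRT_SmoothCompactification,koecher}, the paper to the correctness of the Voronoi algorithm, which is what its computations actually use; your version is arguably more conceptual and algorithm-free, at the cost of assuming a stronger structural input. A genuine plus of your write-up is that you explicitly verify uniqueness, the mutually-inverse property, and inclusion-reversal, which the paper compresses into ``it suffices to prove'' the two-way nonemptiness statement. Two small imprecisions, neither fatal: identifying the contact face requires ruling out recession directions of the closed hull (immediate since $\ip{A,r}>0$ for $A$ positive definite and $r\neq 0$ positive semidefinite), and in the face-reversal step the functional $\ip{A,\cdot}$ with $A\in\sigma(M)$ is not itself a supporting functional for the cone face $C_M$ of $C_{M'}$ (it equals $1$, not $0$, on the generators); one should instead use $\ip{A-A',\cdot}$ with $A'\in\sigma(M')$.
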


\begin{proof}
It suffices to prove that a well-rounded subset $M \subset\OO^N$ has
$\sigma(M) \neq \emptyset$ if and only if there is a cone $C_\sigma
\in \tilde{\Sigma}$ such that $M(\sigma) = M$.

Suppose $C_\sigma \in \tilde{\Sigma}$.  By
Lemma~\ref{lem:wellrounded-positivedefinite}, $M = M(C_\sigma)$ is
well-rounded.  Thus it remains to show that $\sigma(M)$ is non-empty.
We do so by constructing a form $B \in \sigma(M)$ with $M(B) =
M(C_\sigma)$.  Since $C_\sigma \in \tilde{\Sigma}$, there is a perfect
form $A$ such that $C_\sigma$ is a face of the cone $S_A =
\pi^{-1}(\sigma(A))$.  Furthermore, $C_\sigma$ can be described as the
intersection of $S_A$ with some supporting hyperplane $\set{\theta
\mid \ip{H,\theta} = 0}$ for some $H \in \cH^N(\CC)$.  It follows that
\[
\ip{H,q(v)} = 0  \quad \text{if $v \in M$} \quad \text{and} \quad 
\ip{H,q(v)} > 0  \quad \text{if $v \in M(A) \setminus M$.}
\]
Let $B = A + \rho H$.  Since $\ip{H, q(v)} = B[v]$, a standard
argument for Hermitian forms shows that for sufficiently small
positive $\rho$, $B$ is positive definite, $B[v] = m(A)$ for $v \in M$
and $B[v] > m(A)$ for $v \in M(A) \setminus M$.  Thus $M(B) = M$ and
so $B \in \sigma (M)$.  In particular, $\sigma(M)$ is non-empty.

Conversely, suppose $M$ is a well-rounded subset of $\OO^N$ with
$\sigma(M)$ non-empty.  Choose $B \in \sigma(M)$ so that $M(B) = M$.
If $B$ is perfect, then $M = M(B)$ and we are done.  Otherwise, we can
use the generalization of an algorithm of Voronoi \cite{aim-voronoi}
to find a perfect form $A$ such that $M(B) \subset M(A)$. Let $H = B -
A$.  Then
\[
\ip{H,q(v)} = 0  \quad \text{if $v \in M$} \quad \text{and} \quad 
\ip{H,q(v)} > 0  \quad \text{if $v \in M(A) \setminus M$.}
\]
Thus the hyperplane $\set{\theta \mid \ip{H,\theta} = 0}$ is a
supporting hyperplane for the subset of $S_A = \pi^{-1}(\sigma(A))$
spanned by $\set{q(v) \mid v \in M}$.  Therefore $M$ defines a face of
$S_A$ in $\tilde{\Sigma}$ as desired.
\end{proof}

\begin{remark}\label{rem:W-Sigma}
Let $\Sigma_n^*$ denote a set of representatives, modulo the action of
$\GL_N(\OO)$, of $n$-dimensional cells of $X^*_N$ that meet $X_N$.
Let $\Sigma^* = \cup_n \Sigma_n^*$.

The well-rounded retract $W$ is a proper, contractible
$\GL_N(\OO)$-complex.  Modulo $\GL_N(\OO)$, the cells in $W$ are in
bijection with cells in $\Sigma^*$, and the isomorphism classes of the
stabilizers are preserved under this bijection.  To see this, let
$\sigma(M)$ be a cell in $W$.  Then the forms $B \in \sigma(M)$ have
$M(B) = M$ and $m(B) = 1$.  Under the identification in
Theorem~\ref{theorem:duality}, $\sigma(M)$ corresponds to the cone
$C_\sigma'$ with spanning vectors $\set{q(v) \mid v \in M}$.  Let
$\sigma '$ be the corresponding cell.  There is a cell $\sigma \in
\Sigma^*$ that is $\Gamma$-equivalent to $\sigma'$.  If $\gamma \in
\GL_N(\OO)$, then $\gamma \cdot \sigma(M) =
\sigma(\set{(\gamma^*)^{-1}v \mid v \in M}) $.  Thus if $\gamma \in
\Stab(\sigma(M))$, then $(\gamma^*)^{-1} \in \Stab(C_\sigma')$.  It
is therefore clear that $\Stab(C_\sigma') = \Stab(\sigma') \simeq
\Stab(\sigma)$.  

The map $g \mapsto (g^*)^{-1}$ is an isomorphism of groups, so the
duality between $W$ mod $\Gamma$ and $\Sigma^*$ in
Theorem~\ref{theorem:duality} allows us to work with $\Sigma^*$ and its
stabilizer subgroups as if we were working with a proper, contractible
$\GL_N(\OO)$-complex.  This fact will be used later to compute the
mass formula (\S\ref{sec:mass}).
\end{remark}

\section{The cohomology and homology}\label{sec:cohomologyandhomology}
The material in this section follows \cite[\S3]{PerfFormModGrp},
\cite[\S2]{soule-3torsion}, and \cite{agm4}, all of which rely on
\cite{brown}.  Recall that $\Gamma = \GL_N(\OO)$.  In this section, we
introduce a complex $\Vor_{N,D} = (V_*(\Gamma), d_*)$ of
$\ZZ[\Gamma]$-modules whose homology is isomorphic to the group
cohomology of $\Gamma$ modulo small primes.  More precisely, for any
positive integer $n$ let $\cS_{n}$ be the Serre class of finite
abelian groups with orders only divisible by primes less than or equal
to $n$ \cite{serre}.  Then the main result of this section (Theorem
\ref{theorem:groupcoh}) is that, if $n=n (N,D)$ is larger than all the
primes dividing the orders of finite subgroups of $\Gamma$, then
modulo $\cS_{n}$ the homology of $\Vor_{N,D}$ is isomorphic to the
group cohomology of $\Gamma$.

As above let $\Sigma_n^* = \Sigma^*_n(\Gamma)$ denote a finite set of
representatives, modulo the action of $\Gamma$, of $n$-dimensional
cells of $X^*_N$ which meet $X_N$.  A cell $\sigma$ is called
orientable if every element in $\Stab(\sigma)$ preserves the
orientation of $\sigma$.  By $\Sigma_n = \Sigma_n(\Gamma)$ we denote
the set of orientable cells in $\Sigma_n^*(\Gamma)$ Let $\Sigma^* =
\cup_n \Sigma^*_n$ and let $\Sigma = \cup_n \Sigma_n$.

\subsection{Steinberg homology} 
% http://www.read.seas.harvard.edu/~kohler/latex.html  Do we want to
% use US or Europenan convention on dashes?
% is this a trick question?
The arithmetic group $\Gamma$---like
any arithmetic group---is a virtual duality group \cite[Theorem
11.4.4]{borel-serre}.  This means that there is a $\ZZ
[\Gamma]$-module $I$ that plays the role of an ``orientation module''
for an analogue of Poincar\'e duality: for all coefficient modules $M$
there is an isomorphism between the cohomology of $\Gamma '$ with
coefficients in $M$ and the homology of $\Gamma '$ with coefficients
in $M\otimes I$, for any torsion-free finite index subgroup $\Gamma
'\subset \Gamma $.  For more details see \cite[\S11]{borel-serre}.
The module $I$ is called the \emph{dualizing module} for $\Gamma$.
Borel and Serre prove that $I$ is isomorphic to 
$H^{\nu} (\Gamma , \ZZ [\Gamma])$, where $\nu = \vcd \Gamma$ is the
\emph{virtual cohomological dimension} of $\Gamma$
(cf.~\S\ref{sec:mass}); note that for $\Gamma = \GL_{N}(\OO)$, we have
$\nu = N^{2}-N$. 

One can show that $I$ is isomorphic to $\St_{\Gamma}\otimes \Omega $,
where $\St_{\Gamma}$ is the \emph{Steinberg module} (the top
non-vanishing reduced homology group of the Tits building attached to
$\GL_{N}/F$), and $\Omega$ is the \emph{orientation module} (which
records whether or not an element changes the orientation of the
symmetric space).  Since the action of $\GL_N(\OO)$ preserves the
orientation of $\cH^N(\CC)$, (cf.~Remark~\ref{rem:orientable}), the
module $\Omega$ is trivial, and thus $H^{\nu} (\Gamma , \ZZ [\Gamma])
= \St_\Gamma$.% \cite[Proposition~8.2, Corollary~8.3]{brown}.

For any $\Gamma$-module $M$, the \emph{Steinberg homology}
\cite[p.~279]{brown}, denoted $H^{\St}_*(\Gamma, M)$, is defined as
$H^{\St}_*(\Gamma, M) = H_*(\Gamma, \St_\Gamma \otimes M)$.  Let
$\hat{H}^*$ denote the \emph{Farrell cohomology} $\hat{H}^*(\Gamma,M)$
of $\Gamma$ with coefficients in $M$ (cf.~\cite[X.3]{brown}).  There
is a long exact sequence
\begin{equation}\label{eq:long-exact}
\cdots \to H^{\St}_{\nu - i} \to H^i \to \hat{H}^i \to  H^{\St}_{\nu
  - (i + 1)} \to H^{i + 1}\to \hat{H}^{i + 1}\to \cdots
\end{equation}
that implies that we can understand the group cohomology by
understanding the Steinberg homology and the Farrell cohomology.  In
particular, if the Farrell cohomology vanishes, (which is the case
when the torsion primes in $\Gamma$ are invertible in
$M$, see \cite[IX.9~et~seq.]{brown}), then the
Steinberg homology is exactly the group cohomology $H^{\St}_{\nu -
k}(\Gamma, M) \simeq H^k(\Gamma, M)$.

We now specialize to the case $M = \ZZ$ with trivial $\Gamma$-action,
and in the remainder of this section omit the coefficients from
homology and cohomology groups.  Note that the well-rounded retract
$W$ is an $(N^2 - N)$-dimensional proper and contractible
$\Gamma$-complex.

\begin{proposition}\label{prop:steinberg-group}
Let $b$ be an upper bound on the torsion primes for $\Gamma =
\GL_N(\OO)$.  Then modulo the Serre class $\cS_b$, we
have
\[
H^{\St}_{\nu - k}(\Gamma) \simeq H^k(\Gamma).
\]
\end{proposition}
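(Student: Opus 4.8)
The plan is to extract the isomorphism directly from the long exact sequence~\eqref{eq:long-exact}, regarding the Farrell cohomology groups as the error terms and showing that these errors lie entirely in $\cS_b$. Recall that a homomorphism of abelian groups is an isomorphism modulo a Serre class $\cS$ exactly when both its kernel and its cokernel belong to $\cS$. Reindexing~\eqref{eq:long-exact} so that the group cohomology in degree $k$ appears in the middle, the relevant segment reads
\[
\hat{H}^{k-1} \longrightarrow H^{\St}_{\nu - k} \xrightarrow{\ \alpha\ } H^{k} \longrightarrow \hat{H}^{k},
\]
and exactness shows that $\ker\alpha$ is a quotient of $\hat{H}^{k-1}$ while the cokernel of $\alpha$ injects into $\hat{H}^{k}$. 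Since $\cS_b$ is closed under subgroups, quotients, and extensions, the entire proposition reduces to the single claim that every Farrell cohomology group $\hat{H}^{i}(\Gamma)=\hat{H}^{i}(\Gamma,\ZZ)$ is an object of $\cS_b$: granting this, $\ker\alpha$ and the cokernel of $\alpha$ both lie in $\cS_b$, so $\alpha$ is the asserted isomorphism modulo $\cS_b$.

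To verify that claim I would check the two defining conditions of $\cS_b$---supported on small primes, and finiteness---in turn. Because $\Gamma=\GL_N(\OO)$ acts properly and cocompactly on the contractible $\nu$-dimensional well-rounded retract $W$ (finitely many cells modulo $\Gamma$, all with finite stabilizers), it is of type VFP and so admits a complete resolution of $\ZZ$ by finitely generated free $\ZZ[\Gamma]$-modules. For the prime-support condition I would invoke Brown's result \cite[IX.9~et~seq.]{brown}, quoted above, that $\hat{H}^{*}(\Gamma,M)$ vanishes once the torsion primes of $\Gamma$ are invertible in $M$. Taking $M=\ZZ[1/b!]$ and using that the flat base change $-\otimes\ZZ[1/b!]$ commutes with Farrell cohomology for a degreewise finitely generated complete resolution, one obtains $\hat{H}^{*}(\Gamma,\ZZ)\otimes\ZZ[1/b!]\cong\hat{H}^{*}(\Gamma,\ZZ[1/b!])=0$, since every torsion prime of $\Gamma$ is at most $b$ and hence invertible in $\ZZ[1/b!]$. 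Thus $\hat{H}^{*}(\Gamma,\ZZ)$ is a torsion group, every element of which is killed by a product of primes $\le b$.

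The step I expect to be the main obstacle is the remaining finiteness, since below the virtual cohomological dimension $\hat{H}^{i}(\Gamma)$ need not agree with ordinary cohomology and finiteness is not formal. Here the same finitely generated complete resolution does the work: Farrell cohomology is the cohomology of the $\Gamma$-fixed $\Hom$-complex of that resolution, whose terms are finitely generated abelian groups, so $\hat{H}^{i}(\Gamma)$ is finitely generated in every degree; being torsion by the previous paragraph, it is finite. A finitely generated torsion group whose torsion is supported on primes $\le b$ is precisely an object of $\cS_b$, which establishes the reduced claim and, by the first paragraph, the proposition. I would take care to confirm that $\Gamma$ really is VFP (via a torsion-free finite-index subgroup acting freely and cocompactly on $W$) and that the localization exchange is legitimate, as both the prime-support and the finiteness halves rest on the finiteness of the complete resolution.
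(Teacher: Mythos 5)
Your proposal is correct and takes essentially the same route as the paper, which obtains this proposition directly from the long exact sequence \eqref{eq:long-exact} together with Brown's result that Farrell cohomology is concentrated at the torsion primes of $\Gamma$ (all $\leq b$). Your write-up simply makes explicit what the paper leaves implicit: the kernel/cokernel bookkeeping in the exact sequence and the finiteness of $\hat{H}^i(\Gamma,\ZZ)$, which you correctly ground in a finite-type complete resolution coming from the proper, cocompact action on the well-rounded retract.
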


\subsection{The Voronoi complex}\label{ss:vor}
Let $V_n(\Gamma)$ denote the free abelian group generated by
$\Sigma_n(\Gamma)$.  Let $d_n \colon V_n(\Gamma) \to V_{n -
  1}(\Gamma)$ be the map defined in \cite[\S3.1]{PerfFormModGrp}, and
  denote the complex $(V_{*} (\Gamma), d_{*})$ by $\Vor_{N,D}$.

Let $\partial X^*_N$ denote the cells in $X^*_N$ that do not meet $X$.  Then
$\partial X^*_N$ is a $\Gamma$-invariant subcomplex of $X^*_N$.  
Let $H_*^{\Gamma }(X^*_N, \partial X_N^*)$ denote the relative
equivariant homology of the
pair $(X^*_N, \partial X^*_N)$ with integral coefficients
(cf.~\cite[VII.7]{brown}).

\begin{proposition}\label{prop:Voronoi-equivariant}
  Let $b$ be an upper bound on the torsion primes for $\Gamma =
  \GL_N(\OO)$.  Modulo the Serre class $\cS_b$,  
\[
  H_n(\Vor_{N,D}) \simeq H^\Gamma_n(X_N^*, \partial X_N^*).
\]
\end{proposition}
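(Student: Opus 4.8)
The plan is to compute $H^\Gamma_*(X_N^*, \partial X_N^*)$ by means of the equivariant homology spectral sequence attached to the skeletal filtration of the $\Gamma$-CW pair $(X_N^*, \partial X_N^*)$, following \cite[VII.7--VII.8]{brown}. Since $\Gamma$ acts cellularly on $X_N^*$ preserving the Voronoi decomposition and $\partial X_N^*$ is a $\Gamma$-invariant subcomplex, the filtration by skeleta yields a spectral sequence
\[
E^1_{p,q} = \bigoplus_{\sigma \in \Sigma_p^*} H_q(\Stab(\sigma); \ZZ_\sigma) \Longrightarrow H^\Gamma_{p+q}(X_N^*, \partial X_N^*),
\]
where $\ZZ_\sigma$ is the orientation module on which $\Stab(\sigma)$ acts through the sign of its action on the orientation of $\sigma$. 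Here I use that the relative chains are generated by the $\Gamma$-orbits of cells meeting $X_N$, a finite set of representatives of which is $\Sigma_p^*$, and that (via Remark~\ref{rem:W-Sigma}) the action is proper, so every $\Stab(\sigma)$ is finite.

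Next I would analyze the $E^1$ page modulo the Serre class $\cS_b$. Each $\Stab(\sigma)$ is a finite subgroup of $\Gamma$, so its order is divisible only by torsion primes of $\Gamma$, all of which are $\le b$. Consequently, for $q>0$ the group $H_q(\Stab(\sigma); \ZZ_\sigma)$ is a finite abelian group lying in $\cS_b$ and therefore vanishes modulo $\cS_b$. For $q=0$ the term is the module of coinvariants $(\ZZ_\sigma)_{\Stab(\sigma)}$: when $\sigma$ is orientable this is $\ZZ$, whereas when some element of $\Stab(\sigma)$ reverses the orientation of $\sigma$ the coinvariants are $\ZZ/2\ZZ$, which again lies in $\cS_b$. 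Hence, modulo $\cS_b$, the bottom row reduces to $\bigoplus_{\sigma \in \Sigma_p} \ZZ = V_p(\Gamma)$, the sum now running only over the orientable representatives $\Sigma_p$.

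Since only the $q=0$ row survives modulo $\cS_b$, the spectral sequence degenerates at $E^2$, giving
\[
H^\Gamma_n(X_N^*, \partial X_N^*) \simeq H_n\bigl(V_*(\Gamma), d^1\bigr)
\]
modulo $\cS_b$, where $d^1$ is the edge differential on the surviving row. It then remains to identify $d^1$ with the Voronoi boundary map $d_n$ of \cite[\S3.1]{PerfFormModGrp}.

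The main obstacle is precisely this last identification. One must verify that the boundary induced on the coinvariants $V_*(\Gamma)$ — assembled from cellular incidence numbers together with the $\pm 1$ twisting dictated by the orientation characters $\ZZ_\sigma$ — coincides with the explicit map $d_n$, with all sign and orientation conventions matched. I would handle this by comparing the definition of $d_n$ (built from incidence numbers of orientable cells, twisted by the orientation action of the stabilizers) directly against the description of the $d^1$ differential in \cite[VII.8]{brown}, proceeding exactly as in \cite[\S3]{PerfFormModGrp} and \cite[\S2]{soule-3torsion}. Care is needed at faces whose stabilizer reverses orientation, since such faces drop out both from $V_*(\Gamma)$ and from the incidence computation modulo $\cS_b$, and one must check the two vanishings are compatible.
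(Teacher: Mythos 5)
Your proposal follows essentially the same route as the paper's proof: both invoke the equivariant spectral sequence $E^1_{pq} = \bigoplus_{\sigma\in\Sigma_p^*} H_q(\Stab(\sigma);\ZZ_\sigma) \Rightarrow H^\Gamma_{p+q}(X_N^*,\partial X_N^*)$, kill the non-orientable $q=0$ contributions (which are $2$-torsion) and the $q>0$ rows modulo $\cS_b$ using finiteness of the stabilizers, and identify the surviving bottom row with $V_*(\Gamma)$. The identification $d^1_n = d_n$ that you flag as the main remaining obstacle is dispatched in the paper exactly as you propose, by citing Soul\'e's Proposition~2 together with the detailed treatment in \cite[\S3.2]{PerfFormModGrp}.
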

\begin{proof}
This result follows from \cite[Proposition~2]{soule-3torsion}.  The
argument is explained in detail for $F = \QQ$ in
\cite[\S3.2]{PerfFormModGrp} and can be extended to imaginary quadratic $F$.
For the convenience of the reader we recall the argument.

There is a spectral sequence $E^r_{pq}$ 
converging to the equivariant homology groups 
\[
H^\Gamma_{p + q}(X_N^*,
\partial X_N^*)
\]
of the homology pair $(X^*_N, \partial X^*_N)$ such
that  
\begin{equation}\label{eq:brown}
E^1_{pq} = \bigoplus_{\sigma \in \Sigma^*_p}H_q(\Stab(\sigma),
\ZZ_\sigma) \Rightarrow H^\Gamma_{p + q}(X_N^*,
\partial X_N^*),
\end{equation}
where $\ZZ_\sigma$ is the orientation module for $\sigma$.  

When $\sigma$ is not orientable the homology
$H_0(\Stab(\sigma), \ZZ_\sigma)$  
is killed by $2$.
Otherwise, $H_0(\Stab(\sigma), \ZZ_\sigma) \simeq \ZZ_\sigma$.
Therefore modulo $\cS_2$ we have 
\[E^1_{n,0} \simeq \bigoplus_{\sigma \in \Sigma_n} \ZZ_\sigma.\]
 Furthermore, when $q > 0$, $H_q(\Stab(\sigma), \ZZ_\sigma)$ lies in
the Serre class $\cS_b$, where $b$ is the upper bound on the torsion
primes for $\Gamma$. In other words, modulo $\cS_b$, the $E^1$ page
of the spectral sequence \eqref{eq:brown} is concentrated in the
bottom row, and there is an identification of the bottom row groups with the
groups $V_n(\Gamma)$ defined above.  Finally,
\cite[Proposition~2]{soule-3torsion} shows that $d^1_n = d_n$.
\end{proof}
\begin{remark}
If we do not work modulo $\cS_b$, then there are other
entries in the spectral sequence to consider before we get
$H^\Gamma_n(X^*_{N}, \partial X^*_{N})$.  In particular, the small torsion
in $H_*(\Vor_{N,D})$ may not agree in general with the small torsion
in $H^\Gamma_{p + q}(X_N^*, \partial X_N^*)$.
\end{remark}

\subsection{Equivariant relative homology to Steinberg homology}
For $V \subset F^N$ a proper subspace, let $C(V)$ be the set of
matrices $A \in C_N^*$ such that the kernel of $A$ is $V
\otimes_{\QQ } \RR$, and let $X(V) = \pi(C(V))$.  The closure
$\overline{C(V)}$ of $C(V)$ in the usual topology induced from
$\cH^N(\CC)$ consists of matrices whose kernel contains $V$.  Let
$\overline{X(V)} = \pi(\overline{C(V)})$.

The following Lemma shows that $\overline{X(V)}$ is contractible.
Soul\'e proves this for $F = \QQ$ in \cite[Lemma~1]{soule-3torsion}
and \cite[Lemma~2]{soule-addendum}, and the same proof with simple
modifications applies to our setting:

\begin{lemma}\label{lem:contractible} 
  For any proper subspace $V \subset F^N$, the CW-complex
  $\overline{X(V)}$ is contractible.
\end{lemma}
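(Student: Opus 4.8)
The plan is to realize $\overline{X(V)}$, up to homeomorphism, as a nonempty convex set, so that contractibility follows at once.

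First I would record the convex-geometric shape of the underlying cone. Set
\[
K := \set{A \in \cH^N(\CC) \mid A \text{ positive semi-definite and } V\otimes_\QQ\RR \subseteq \ker A}.
\]
The positive semi-definite forms constitute a convex cone, and the requirement $V\otimes_\QQ\RR \subseteq \ker A$ is a finite system of linear equations in the real entries of $A$; hence $K$ is a closed convex cone. By the description of $\overline{C(V)}$ in the statement, $K = \overline{C(V)}$, and $\overline{X(V)}$ is the image under $\pi$ of $K \setminus \set{0}$.

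Next I would pass to a linear slice. Every nonzero positive semi-definite form has $\Tr(A) > 0$, so each ray $\RR_{+}\cdot A$ with $A \in K\setminus\set{0}$ meets the affine hyperplane $\set{A \mid \Tr(A) = 1}$ in the single point $A/\Tr(A)$. Consequently $A \mapsto A/\Tr(A)$ descends through $\pi$ to a homeomorphism between $\overline{X(V)}$ (with the quotient topology) and the slice $K \cap \set{A \mid \Tr(A) = 1}$. This slice is the intersection of the convex set $K$ with an affine hyperplane, hence convex, and it is nonempty: a positive real multiple of the Hermitian-orthogonal projection onto the complement of $V\otimes_\QQ\RR$ lies in it, the complement being nonzero because $V$ is proper. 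A nonempty convex set contracts to any chosen point along straight-line homotopies, which remain inside the slice by convexity; transporting this homotopy back through the homeomorphism contracts $\overline{X(V)}$.

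The one point that genuinely needs care --- and which I expect to be the main obstacle --- is reconciling this contraction with the CW-topology named in the statement, rather than the quotient topology used above. Here I would follow Soul\'e's argument: the closed cells of $X^*_N$ that meet $\overline{X(V)}$ make it a subcomplex, and the straight-line homotopy on the convex slice is compatible with that cell structure, so it yields a contraction in the CW-topology as well. Everything else is a routine transcription of the case $F = \QQ$: symmetric real forms are replaced by Hermitian forms and real kernels by their complexifications $V\otimes_\QQ\RR = V\otimes_F\CC$, while the positive-semidefiniteness and convexity inputs are unchanged. This is precisely the sense in which ``the same proof with simple modifications applies.''
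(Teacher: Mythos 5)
Your convex-slice reduction is fine as far as it goes --- the paper, too, ultimately relies on convexity, and your verification that the cone is convex and the trace-one slice nonempty is correct --- but the proposal has a genuine gap exactly at the point you flag and then wave away. The homeomorphism you construct between $\overline{X(V)}$ and the trace-one slice is a homeomorphism for the \emph{quotient} topology only, and on $\overline{X(V)}$ the quotient and CW topologies genuinely differ: at a boundary cell, i.e.\ at a form whose kernel strictly contains $V\otimes_\QQ\RR$, infinitely many cells of $\overline{X(V)}$ accumulate (the same phenomenon as infinitely many Voronoi triangles meeting each cusp in Figure~\ref{fig:tess-spine}), so the cell structure is not locally finite there and the CW topology is strictly finer than the quotient topology. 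A homotopy that is continuous for the quotient topology therefore need not be continuous for the CW topology, and your closing claim --- that the straight-line homotopy ``is compatible with that cell structure, so it yields a contraction in the CW-topology as well'' --- is precisely the statement that requires proof, not a proof; it is also not Soul\'e's argument.

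What the paper (following Soul\'e) actually does is: (i) replace $\overline{X(V)}$ by the open stratum $X(V)$, which has the same homotopy type; (ii) prove that on $X(V)$ --- but not on its closure, where this is false --- the covering by the closed sets $\sigma(A)\cap X(V)$, $A$ perfect, \emph{is} locally finite, so that on $X(V)$ the CW topology coincides with the usual topology; and only then (iii) invoke convexity of $X(V)$. Step (ii) is the real content and is where reduction theory enters: one uses that $\gamma\cdot X(V)\cap X(V)\neq\emptyset$ forces $\gamma$ to stabilize $V$, that the cells $\sigma(A)\cap X(V)$ are finite modulo the image $\Gamma'$ of the stabilizer $P$ of $V$, and that a Siegel set can meet only finitely many translates of another Siegel set. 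No substitute for this input (for instance, a Dowker-type comparison of weak and metric topologies on the complex) appears in your proposal, so the contraction you produce lives on the wrong space: it contracts $\overline{X(V)}$ with the quotient topology, not the CW-complex $\overline{X(V)}$ of the lemma.
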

\begin{proof}
Let $A$ be a perfect Hermitian form over $F$.  Then $\sigma(A) \cap
\overline{X(V)}$ is the perfect cone of the intersection $M(A) \cap
V^\perp$, where $V^\perp$ is the orthogonal complement to $V$ in
$F^N$.  It follows that $\overline{X(V)}$ is a sub-CW-complex of
$X^*_N$.

Now we show that $\overline{X(V)}$ is contractible.  Since
$\overline{X(V)}$ has the same homotopy type as $X (V)$, it is enough to prove that
$X (V)$ is contractible.  For this it suffices to prove that the
CW-topology on ${X(V)}$ coincides with its usual topology, since
$X(V)$ in this topology is clearly contractible (because it is convex).
To prove this, we argue that the covering of $X (V)$ by the closed
sets of the form $\sigma (A)\cap X (V)$, where $A$ is perfect, is
locally finite.

Given any positive definite Hermitian form $A$, let $A^{\perp}$ be its
restriction to the real space $V_{\RR}^{\perp} = V^{\perp}\otimes_{\QQ }
\RR$.  Then $X (V)$ is isomorphic to the symmetric space for the group
$\Aut (V_{\RR}^{\perp})$ via the map $A\mapsto A^{\perp}$.  If $\gamma \in
\Gamma$ satisfies $\gamma \cdot X (V)\cap X (V)\not = \emptyset$, then
in fact $\gamma $ stabilizes $V$.  Let $P\subset \Gamma$ be the
stabilizer of $V$ and let $\alpha \colon P\rightarrow \Gamma ' = \Aut
(V^{\perp}_{\RR}\cap L)$ be the projection map.  Then the 
set of cells $\sigma (A)\cap X (V)$, as $A$ ranges over the perfect
Hermitian forms, is finite modulo $\Gamma '$: if $[\sigma (A)\cap X
(V)] \cap [(\gamma \cdot \sigma (A)) \cap X (V)] \not =\emptyset$, then
$\gamma \in P$, and thus $(\gamma \cdot \sigma (A) \cap X (V)) =
\alpha (\gamma) (\sigma (A)\cap X (V))$.   

To conclude the argument one uses Siegel sets; we refer to \cite{borel}
for their definition and to \cite[Ch.~II]{AMRT_SmoothCompactification}
for their properties in our setting.  Given any point $x\in X (V)$,
one can find an open set $U\ni x$ and a Siegel set $\fS $ such that
$\fS\supset U$.  Furthermore, any cell of the form $\sigma (A)\cap X
(V)$ is itself contained in another Siegel set $\fS'$.  Thus if $U$
meets $\gamma (\sigma (A)\cap X (V))$ for $\gamma \in \Gamma '$, we
must have that $\gamma \cdot \fS '$ meets $\fS $.  But this is only
possible for finitely many $\gamma$ by the standard properties of Siegel
sets.  Thus the covering of $X (V)$ by the closed sets of the form
$\sigma (A)\cap X (V)$ ($A$ perfect) is locally finite, which
completes the proof.
\end{proof}

\begin{proposition}\label{prop:relative-steinberg}
  For every $n \geq 0$, there are canonical isomorphisms of
  $\Gamma$-modules
\[H_n(X^*_N, \partial X^*_N) \simeq 
\begin{cases}
  \St_\Gamma & \text{if $n = N - 1$,}\\
0 & \text{otherwise.}
\end{cases}
\]
\end{proposition}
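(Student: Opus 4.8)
The plan is to reduce the computation to the reduced homology of the boundary $\partial X^*_N$ and then to identify that boundary, up to homotopy, with the Tits building of $\GL_N/F$, whose homology is the Steinberg module by the Solomon--Tits theorem.

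First I would observe that $X^*_N$ is contractible. Indeed, $C^*_N$ is a convex cone with the origin removed: a positive convex combination $tA+(1-t)B$ of positive semi-definite forms is again positive semi-definite with kernel $\ker A \cap \ker B$, which is still $F$-rational, and is nonzero whenever $A,B$ are. Passing to the quotient $X^*_N = H\backslash C^*_N$ and choosing the cross-section $\set{A \mid \Tr A = 1}$ exhibits $X^*_N$ as a convex set, hence contractible. The long exact sequence of the pair $(X^*_N,\partial X^*_N)$ then yields canonical, $\Gamma$-equivariant isomorphisms
\[
H_n(X^*_N,\partial X^*_N)\ \simeq\ \tilde H_{n-1}(\partial X^*_N)
\]
for all $n$, so it remains to compute the reduced homology of $\partial X^*_N$.

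Next I would cover $\partial X^*_N$ by the closed sets $\overline{X(V)}$, where $V$ ranges over the proper nonzero $F$-subspaces of $F^N$: every degenerate form in $C^*_N$ has some nonzero $F$-rational kernel and hence lies in one of these. By Lemma~\ref{lem:contractible} each $\overline{X(V)}$ is contractible, and since $\overline{X(V)}$ consists exactly of the forms whose kernel contains $V$, a finite intersection satisfies
\[
\overline{X(V_0)}\cap\cdots\cap\overline{X(V_k)} = \overline{X(V_0+\cdots+V_k)},
\]
which is again contractible if $V_0+\cdots+V_k \neq F^N$ and empty otherwise. Thus every nonempty finite intersection is contractible, and the (equivariant) nerve lemma identifies $\partial X^*_N$, up to $\Gamma$-homotopy, with the nerve $\mathcal N$ of the cover: the simplicial complex whose vertices are the proper nonzero subspaces and whose simplices are the finite sets with proper sum. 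A standard comparison shows $\mathcal N$ is homotopy equivalent to the Tits building $T$ of $\GL_N/F$ (both are functorially built from the poset of proper nonzero subspaces), compatibly with the action of $\Gamma$.

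Finally, by the Solomon--Tits theorem (see, e.g., \cite{brown}) $T$ is homotopy equivalent to a wedge of spheres of dimension $N-2$, so $\tilde H_j(T)=0$ for $j\neq N-2$ while $\tilde H_{N-2}(T)=\St_\Gamma$ by the very definition of the Steinberg module. Combining this with the two identifications above gives $H_n(X^*_N,\partial X^*_N)\simeq \St_\Gamma$ for $n=N-1$ and $0$ otherwise, as $\Gamma$-modules. The main obstacle is the middle step: making the nerve lemma rigorous for this closed, non-compact, $\Gamma$-equivariant CW cover (the required local finiteness is exactly the input supplied by the proof of Lemma~\ref{lem:contractible}) and checking that the resulting equivalence $\partial X^*_N\simeq T$ is genuinely $\Gamma$-equivariant, so that the identification of $\tilde H_{N-2}$ with $\St_\Gamma$ is one of $\Gamma$-modules. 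For $F=\QQ$ this is carried out by Soul\'e \cite{soule-3torsion}, and the arguments transfer to imaginary quadratic $F$ with only minor changes.
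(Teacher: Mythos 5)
Your proposal takes essentially the same route as the paper: contractibility of $X^*_N$ plus the long exact sequence of the pair reduces everything to $\tilde H_{n-1}(\partial X^*_N)$, which is then computed by covering $\partial X^*_N$ with the contractible sets $\overline{X(V)}$ of Lemma~\ref{lem:contractible}, applying the nerve lemma to compare with the Tits building, and invoking Solomon--Tits. If anything you are more careful than the paper at two points: you compute the pairwise intersections correctly as $\overline{X(V)}\cap\overline{X(W)}=\overline{X(V+W)}$ (the paper writes $\overline{X(V\cap W)}$, which is a slip, since $\overline{X(V)}$ consists of forms whose kernel \emph{contains} $V$), and you note that the resulting nerve is only $\Gamma$-equivariantly homotopy equivalent to, rather than equal to, the building $T_{F,N}$.
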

\begin{proof}
Since $X^*_N$ is contractible, the long exact sequence for the pair
$(X^*_N, \partial X^*_N)$ 
\[\cdots \to H_n(\partial X^*_N) \to H_n(X^*_N) \to H_n(X^*_N, \partial
X^*_N) \to H_{n - 1}(\partial X^*_N) \to H_{n-1}(X^*_N) \to \cdots\]
implies $H_n(X^*_N, \partial X^*_N) \simeq \tilde{H}_{n-1}(\partial
X^*_N)$ for $n \geq 1$, where $\tilde{H}$ 
%the tilde 
denotes reduced homology.

From properties of Hermitian forms, it is clear that $\overline{X(V)}
\cap \overline{X(W)} = \overline{X(V \cap W)}$ 
for each pair of proper, non-zero subspaces $V, W  \subset F^N$.  Thus the nerve
of the covering $\mathcal{U}$ of $\partial X^*_N$ by $\overline{X(V)}$
as $V$ ranges 
over non-zero, proper subspaces of $F^N$ is the spherical Tits
building $T_{F,N}$.  Since each $\overline{X(V)}$ in $\mathcal{U}$ is
contractible by Lemma~\ref{lem:contractible}, the cover $\mathcal{U}$
is a good cover (i.e.,~non-empty finite intersections are diffeomorphic
to $\RR^{d}$ for some $d$).  It follows that the relative homology groups are 
isomorphic, i.e. $\tilde{H}_n(\partial X^*_N) \simeq \tilde{H}_n(T_{F,N})$.
By the Solomon-Tits theorem \cite{solomon}, the latter is isomorphic to $\St_\Gamma$
if $n = N - 2$ and is trivial otherwise. 
\end{proof}

\begin{proposition}\label{prop:equivariant-steinberg} For all $n$, we
have
\[  H^\Gamma_{n}(X^*_N, \partial X^*_N) = H^{\St}_{n - (N - 1)}(\Gamma).\]
\end{proposition}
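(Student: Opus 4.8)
The plan is to compute the equivariant homology $H^\Gamma_*(X_N^*, \partial X_N^*)$ by viewing it as the hyperhomology of $\Gamma$ acting on the relative cellular chain complex $C_*(X_N^*, \partial X_N^*)$, following Brown~\cite[VII.7]{brown}. Such a hyperhomology carries two spectral sequences. The first, obtained by filtering by cellular degree, is exactly the spectral sequence \eqref{eq:brown} already used in the proof of Proposition~\ref{prop:Voronoi-equivariant}. Here I would instead use the second one, which filters by the homological degree of $\Gamma$ and takes the form
\[ E^2_{pq} = H_p(\Gamma, H_q(X_N^*, \partial X_N^*)) \Rightarrow H^\Gamma_{p+q}(X_N^*, \partial X_N^*), \]
where the coefficients are the ordinary relative homology groups of the pair, regarded as $\Gamma$-modules via the action of $\Gamma$ on the space.

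The key input is Proposition~\ref{prop:relative-steinberg}, which determines these coefficient modules completely: as $\Gamma$-modules, $H_q(X_N^*, \partial X_N^*) \simeq \St_\Gamma$ when $q = N-1$ and is $0$ otherwise. Hence the $E^2$ page is concentrated in the single row $q = N-1$, where
\[ E^2_{p, N-1} = H_p(\Gamma, \St_\Gamma) = H^{\St}_p(\Gamma) \]
by the definition of Steinberg homology. A spectral sequence supported on a single row degenerates at $E^2$, since every differential $d^r$ with $r \geq 2$ has either its source or its target in a vanishing row. Therefore $H^\Gamma_{p + (N-1)}(X_N^*, \partial X_N^*) \simeq E^2_{p, N-1} = H^{\St}_p(\Gamma)$, and reindexing by $n = p + (N-1)$ yields the asserted identity $H^\Gamma_n(X_N^*, \partial X_N^*) = H^{\St}_{n - (N-1)}(\Gamma)$.

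The one point that needs care is that the isomorphism $H_{N-1}(X_N^*, \partial X_N^*) \simeq \St_\Gamma$ furnished by Proposition~\ref{prop:relative-steinberg} be $\Gamma$-equivariant, so that the coefficient system feeding the spectral sequence really is $\St_\Gamma$ and not some twist of it; this is precisely why that proposition is stated as an isomorphism of $\Gamma$-modules. I expect no trouble from orientation characters, since the relative homology already carries the Steinberg module with its natural $\Gamma$-action and no additional orientation twist enters (cf.~Remark~\ref{rem:orientable}). The main obstacle, modest as it is, is simply to invoke the correct one of the two hyperhomology spectral sequences and to confirm that the collapse is clean; once the equivariant coefficient identification is in place, the conclusion is immediate.
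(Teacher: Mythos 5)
Your proof is correct and is essentially identical to the paper's: the paper also invokes the spectral sequence $E^2_{pq} = H_p(\Gamma, H_q(X_N^*, \partial X_N^*)) \Rightarrow H^\Gamma_{p+q}(X_N^*, \partial X_N^*)$ (citing Soul\'e rather than deriving it as the second hyperhomology spectral sequence from Brown), uses Proposition~\ref{prop:relative-steinberg} to concentrate the $E^2$ page on $q = N-1$, and reads off $H^{\St}_p(\Gamma)$ after the collapse. Your added remarks on degeneration and on the $\Gamma$-equivariance of the coefficient identification are sound elaborations of steps the paper leaves implicit.
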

\begin{proof}
There is a spectral sequence \cite[equation (2)]{soule-3torsion}
computing the relative equivariant homology
\[E^2_{pq} = H_p(\Gamma, H_q(X_N^*, \partial X_N^*)) \Rightarrow
H^\Gamma_{p + q}(X^*_N, \partial X^*_N).\]
Proposition~\ref{prop:relative-steinberg} implies the $E^2$ page of
the spectral sequence is concentrated in the $q = N - 1$ column.  Then  
\[H^\Gamma_{p + (N - 1)}(X^*_N, \partial X^*_N) \simeq
 H_p(\Gamma, H_q(X^*_N, \partial X^*_n)) = H_p(\Gamma, \St_\Gamma) =
 H^{\St}_p(\Gamma),\] 
and the result follows.
\end{proof}

\begin{theorem} \label{theorem:groupcoh}
Let $b$ be an upper bound on the torsion primes for $\GL_N(\OO)$.
Modulo the Serre class $\cS_b$, 
\[H_n(\Vor_{N,D}) \simeq H^{N^2 - 1 - n}(\GL_N(\OO)).\]  
\end{theorem}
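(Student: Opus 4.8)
The plan is to assemble the chain of isomorphisms established in the preceding propositions, all of which hold modulo the Serre class $\cS_b$, and then to match up the numerical indices so that the homological degree $n$ on the left corresponds to the cohomological degree $N^2-1-n$ on the right.

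First I would string together the three results proved above. Proposition~\ref{prop:Voronoi-equivariant} gives, modulo $\cS_b$, an isomorphism $H_n(\Vor_{N,D}) \simeq H^\Gamma_n(X_N^*, \partial X_N^*)$. Proposition~\ref{prop:equivariant-steinberg} then identifies the relative equivariant homology with Steinberg homology via $H^\Gamma_n(X_N^*, \partial X_N^*) = H^{\St}_{n-(N-1)}(\Gamma)$; note this is an exact identity, so it contributes no further error term. Finally Proposition~\ref{prop:steinberg-group} (itself a packaging of the long exact sequence \eqref{eq:long-exact} together with the vanishing of Farrell cohomology modulo the torsion primes) gives $H^{\St}_{\nu - k}(\Gamma) \simeq H^k(\Gamma)$ modulo $\cS_b$, where $\nu = \vcd\Gamma = N^2 - N$. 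Composing these three maps shows that $H_n(\Vor_{N,D})$ is isomorphic, modulo $\cS_b$, to $H^k(\Gamma)$ for the appropriate $k$.

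The bookkeeping step is to solve for $k$. Setting $m = n - (N-1)$, the Steinberg degree appearing after Proposition~\ref{prop:equivariant-steinberg} is $H^{\St}_m(\Gamma)$. To apply Proposition~\ref{prop:steinberg-group} I write $m = \nu - k$, i.e. $k = \nu - m = (N^2 - N) - (n - (N-1)) = N^2 - N - n + N - 1 = N^2 - 1 - n$. This is exactly the exponent in the statement, so the three isomorphisms compose to give $H_n(\Vor_{N,D}) \simeq H^{N^2-1-n}(\GL_N(\OO))$ modulo $\cS_b$, as claimed.

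The one genuine subtlety, rather than a serious obstacle, is that each isomorphism holds only \emph{modulo} the Serre class $\cS_b$, so I must check that composing them stays within that class. This follows because $\cS_b$ is a Serre class and hence closed under the relevant operations: a zig-zag of maps whose kernels and cokernels lie in $\cS_b$ composes to a map with kernel and cokernel again in $\cS_b$. Since $b$ is a common upper bound on the torsion primes used in all three propositions, the same class $\cS_b$ governs every stage, and no new primes are introduced by the index shift. Thus the main work is simply to invoke the earlier results in the correct order and verify the degree arithmetic; the proof is essentially a concatenation.
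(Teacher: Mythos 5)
Your proposal is correct and follows exactly the paper's own proof: the paper likewise concatenates Propositions~\ref{prop:Voronoi-equivariant}, \ref{prop:equivariant-steinberg}, and \ref{prop:steinberg-group}, with the same index computation $k = (N^2-N) - (n-(N-1)) = N^2-1-n$. Your additional remarks on composing isomorphisms modulo the Serre class $\cS_b$ are sound and simply make explicit what the paper leaves implicit.
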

\begin{proof}
Let $\Gamma = \GL_N(\OO)$.  
Modulo $\cS_b$, Propositions~\ref{prop:Voronoi-equivariant}, 
    \ref{prop:equivariant-steinberg}, and \ref{prop:steinberg-group} imply 
\[H_n(\Vor_{N,D}) \simeq H_n^\Gamma(X^*_N, \partial X^*_N)
\simeq H^{\St}_{n - (N -  1)}(\Gamma) \simeq H^{N^2 - 1 - n}(\Gamma).\]
\end{proof}

\subsection{Torsion elements in \texorpdfstring{$\Gamma$}{Gamma}}\label{subsec:torsion}
To finish this section we discuss the possible torsion that can arise
in the stabilizer subgroups of cells in our complexes.  This allows us
to make the bound in Theorem~\ref{theorem:groupcoh} effective.

\begin{lemma}\label{lem:cyclotomic}
  Let $p$ be an odd prime, and let $F/\QQ$ be a quadratic field.  Let
  $\Phi_p = x^{p-1} + x^{p-2} + \dots + x + 1$ be the $p^\text{th}$
  cyclotomic polynomial.  Then $\Phi_p$ factors over $F$ as a product
  of irreducible polynomials of degree $(p-1)/2$ if $F =
  \QQ(\sqrt{p^*})$, where $p^* = (-1)^{(p-1)/2}p$, and is irreducible
  otherwise.   
\end{lemma}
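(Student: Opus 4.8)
The plan is to reduce the factorization question to a single degree computation via the Galois theory of the cyclotomic field $K := \QQ(\zeta_p)$, where $\zeta_p$ denotes a primitive $p$th root of unity. Recall that $\Phi_p$ is the minimal polynomial of $\zeta_p$ over $\QQ$, that $K/\QQ$ is Galois, and that $\operatorname{Gal}(K/\QQ) \cong (\ZZ/p\ZZ)^\times$ is cyclic of order $p-1$, acting on the set of primitive $p$th roots of unity $\{\zeta_p^a \mid a \in (\ZZ/p\ZZ)^\times\}$ simply transitively. The irreducible factors of $\Phi_p$ over $F$ are precisely the minimal polynomials over $F$ of these roots, and they correspond to the orbits of $\operatorname{Gal}(KF/F)$ acting on them. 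Because the ambient $\operatorname{Gal}(K/\QQ)$-action is regular, every orbit of the subgroup $\operatorname{Gal}(KF/F)$ has the same size, so all the irreducible factors share a common degree
\[
d := [F(\zeta_p):F] = [KF:F].
\]
Thus the entire problem reduces to computing this single integer $d$.

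Since $K/\QQ$ is Galois, the compositum formula gives $[KF:F] = [K : K \cap F]$, so I first determine $K \cap F$. As $[F:\QQ]=2$, the field $K \cap F$ is either $\QQ$ or $F$, equalling $F$ exactly when $F \subseteq K$. This yields two cases. If $F \subseteq K$, then $d = [K:F] = (p-1)/2$, and $\Phi_p$ factors into $(p-1)/d = 2$ irreducible polynomials of degree $(p-1)/2$. If $F \not\subseteq K$, then $K \cap F = \QQ$, so $d = [K:\QQ] = p-1$ and $\Phi_p$ remains irreducible over $F$. The relevant quadratic subfield is unique, since the cyclic group $\operatorname{Gal}(K/\QQ)$ of even order $p-1$ has a unique subgroup of index $2$.

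It remains to identify this unique quadratic subfield of $K$ with $\QQ(\sqrt{p^*})$, where $p^* = (-1)^{(p-1)/2}p$; this is the one genuinely arithmetic ingredient. For this I would introduce the quadratic Gauss sum
\[
g := \sum_{a=1}^{p-1} \left( \frac{a}{p} \right) \zeta_p^a \in K,
\]
with $\left(\tfrac{\cdot}{p}\right)$ the Legendre symbol. A direct calculation gives the classical identity $g^2 = (-1)^{(p-1)/2}p = p^*$, so in particular $g \neq 0$. The Galois action is $\sigma_b(g) = \left(\tfrac{b}{p}\right) g$ for $\sigma_b \colon \zeta_p \mapsto \zeta_p^b$, whence $g$ is fixed by the index-two subgroup of squares and negated by any non-residue; thus $g \notin \QQ$, and $\QQ(g) = \QQ(\sqrt{p^*})$ is exactly the quadratic subfield of $K$. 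Combined with the case analysis above, this shows $F \subseteq K$ holds precisely when $F = \QQ(\sqrt{p^*})$, proving the lemma.

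The main obstacle is the Gauss-sum computation $g^2 = p^*$ together with the verification $g \notin \QQ$; all remaining steps are formal Galois theory. I emphasize that we need only the value of $g^2$, not Gauss's delicate determination of the sign of $g$ itself, which keeps this step entirely elementary.
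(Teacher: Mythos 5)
Your proof is correct and follows essentially the same route as the paper: both reduce the factorization to computing $[F(\zeta_p):F]$ via the compositum with $\QQ(\zeta_p)$ and split into cases according to whether $F \cap \QQ(\zeta_p)$ is $\QQ$ or $F$. The only difference is that you additionally prove, via the quadratic Gauss sum, the standard fact that $\QQ(\sqrt{p^*})$ is the unique quadratic subfield of $\QQ(\zeta_p)$, which the paper simply asserts.
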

\begin{proof}
Let $\zeta_p$ denote a primitive $p^\text{th}$ root of unity.
Consider the diagram of Galois extensions below:
\[
\begin{tikzpicture}
  \node (Fz) {$F(\zeta_p)$};
  \node (K) [below of = Fz, right of = Fz]{$\QQ(\zeta_p)$};
  \node (F) [below of = Fz, left of = Fz]{$F$};
  \node (KnF)[below of=K, left of=K] {$F \cap \QQ(\zeta_p)$};
  \node (Q) [below of=KnF]{$\QQ$};
  \draw[-] (Fz) to (K);
  \draw[-] (Fz) to (F);
  \draw[-] (KnF) to (Q);
  \draw[-] (F) to (KnF);
  \draw[-] (K) to (KnF);
\end{tikzpicture}
\]

If $F = \QQ(\sqrt{p^*})$, then $F$ is the unique quadratic subfield of
$\QQ(\zeta_p)$.  It follows that $\Phi_p$ factors over $F$ as a
product of irreducible polynomials of degree $(p-1)/2$.  If $F \neq
\QQ(\sqrt{p^*})$, then $F \cap \QQ(\zeta_p) = \QQ$.  It follows that
$[F(\zeta_p):\QQ(\zeta_p)] = [F: \QQ] =  2$ and so $[F(\zeta_p): F] =
p-1$.  Thus $\Phi_p$ is irreducible over $F$ if $F \neq
\QQ(\sqrt{p^*})$. 
\end{proof}

\begin{lemma}\label{lem:orderp}
  Let $p$ be an odd prime, and let $F/\QQ$ be a imaginary quadratic
  field.  If $g \in \GL_N(F)$ has order $p$, then 
\[p \leq 
\begin{cases}
  N + 1 & \text{if $p \equiv 1 \bmod{4}$,}\\
  2N + 1 & \text{otherwise.}
\end{cases}
\]
\end{lemma}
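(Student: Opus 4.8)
The plan is to connect the order of $g$ to the factorization of the cyclotomic polynomial $\Phi_p$ over $F$ supplied by Lemma~\ref{lem:cyclotomic}. First I would record that, since $g^p = 1$, the minimal polynomial $\mu_g$ of $g$ over $F$ divides $x^p - 1 = (x-1)\Phi_p(x)$. Because $g$ has order exactly $p$, in particular $g \neq 1$, so $\mu_g$ does not divide $x-1$; hence $\mu_g$ must have at least one irreducible factor $\psi$ over $F$ that divides $\Phi_p$ nontrivially. Equivalently, $g$ is diagonalizable over $\overline{F}$ with eigenvalues among the $p$th roots of unity, and at least one of them is a primitive $p$th root of unity $\zeta_p$, whose minimal polynomial over $F$ is precisely such a factor $\psi$. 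By Cayley--Hamilton $\mu_g$ divides the characteristic polynomial of $g$, which has degree $N$, so $\psi$ divides a degree-$N$ polynomial and therefore $\deg \psi \leq N$.

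Next I would use Lemma~\ref{lem:cyclotomic} to determine $\deg \psi$. Writing $p^* = (-1)^{(p-1)/2}p$, that lemma gives $\deg \psi = (p-1)/2$ when $F = \QQ(\sqrt{p^*})$ and $\deg \psi = p-1$ otherwise, since in the latter case $\Phi_p$ is irreducible over $F$. Feeding these into the inequality $\deg \psi \leq N$ yields $p \leq 2N+1$ in the first case and $p \leq N+1$ in the second.

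Finally I would decide which case can occur using the hypothesis that $F$ is imaginary quadratic, together with the sign of $p^*$, which is governed by $p \bmod 4$. If $p \equiv 1 \bmod 4$, then $(p-1)/2$ is even, so $p^* = p > 0$ and $\QQ(\sqrt{p^*})$ is real; as $F$ is imaginary, necessarily $F \neq \QQ(\sqrt{p^*})$, forcing $\deg \psi = p-1$ and hence $p \leq N+1$, as claimed. If instead $p \equiv 3 \bmod 4$ (the ``otherwise'' case for an odd prime), then $(p-1)/2$ is odd, so $p^* = -p < 0$ and $\QQ(\sqrt{p^*})$ is imaginary and may well equal $F$; the worst case $\deg \psi = (p-1)/2$ gives $p \leq 2N+1$, while the alternative $\deg \psi = p-1$ gives the stronger $p \leq N+1$, so in every subcase $p \leq 2N+1$. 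The one step that deserves care — the main ``obstacle,'' though a mild one — is the opening reduction: justifying that a nontrivial irreducible cyclotomic factor over $F$ genuinely appears in the characteristic polynomial. This rests only on $g \neq 1$ forcing a $\Phi_p$-factor in $\mu_g$ and on $\mu_g \mid \chi_g$, after which everything is bookkeeping with Lemma~\ref{lem:cyclotomic} and the parity of $(p-1)/2$.
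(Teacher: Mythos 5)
Your proof is correct and follows essentially the same route as the paper: Lemma~\ref{lem:cyclotomic} to control the degree of the irreducible factors of $\Phi_p$ over $F$, Cayley--Hamilton to bound that degree by $N$, and the parity of $(p-1)/2$ to rule out $F = \QQ(\sqrt{p^*})$ when $p \equiv 1 \bmod 4$. Your version is in fact slightly more careful than the paper's, since you account for the possibility that the minimal polynomial also contains the factor $x-1$ and you spell out both subcases when $p \equiv 3 \bmod 4$, which the paper compresses into a single ``similarly.''
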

\begin{proof}
If $p \equiv 1 \bmod{4}$, then $p^* > 0$.  In particular, $F \neq
\QQ(\sqrt{p^*})$ and so by Lemma~\ref{lem:cyclotomic}, $\Phi_p$ is
irreducible over $F$.  Then the minimal polynomial of $g$ is
$\Phi_p$.  By the Cayley-Hamilton Theorem, $\Phi_p$ divides the
characteristic polynomial of $g$.  Therefore $p-1 \leq N$.
Similarly, if $p \not \equiv 1 \bmod{4}$, then  $(p-1)/2 \leq N$.
\end{proof}

Lemmas~\ref{lem:cyclotomic} and \ref{lem:orderp} immediately imply the following:

\begin{proposition}\label{prop:stabprimes}
If $g \in \GL_3(\OO_{F})$ has prime order $q$, then $q \in
\set{2,3,7}$ for $F = \QQ(\sqrt{-7})$ and $q\in \set{2,3}$ otherwise.
If $g \in \GL_4(\OO_{F})$ has prime order $q$, then $q \in \set{2,3,5,7}$ for
$F = \QQ(\sqrt{-7})$ and $q\in \set{2,3,5}$ otherwise. 
\end{proposition}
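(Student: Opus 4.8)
The plan is to read off the admissible prime orders directly from the two preceding lemmas, treating the even and odd cases separately and then isolating the exceptional prime $7$. The bulk of the argument is a case analysis, in keeping with the claim that the result follows immediately from Lemmas~\ref{lem:cyclotomic} and \ref{lem:orderp}; the only point needing genuine care is the $q = 7$ case.

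First I would dispose of the even prime. The scalar matrix $-I$ lies in $\GL_N(\OO)$ and has order $2$ for every $N$ and every $F$, so $q = 2$ is admissible and belongs in each list; since Lemmas~\ref{lem:cyclotomic} and \ref{lem:orderp} only constrain odd primes, there is nothing further to check here. Next I would handle the odd primes by substituting $N = 3$ and $N = 4$ into the bound of Lemma~\ref{lem:orderp}. When $q \equiv 1 \bmod 4$ the lemma gives $q \leq N+1$: for $N = 3$ this reads $q \leq 4$, which excludes every prime $\equiv 1 \bmod 4$ (the smallest being $5$), while for $N = 4$ it reads $q \leq 5$, allowing only $q = 5$. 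When $q \equiv 3 \bmod 4$ the lemma gives $q \leq 2N+1$: for $N = 3$ this is $q \leq 7$, leaving $q \in \set{3,7}$, and for $N = 4$ it is $q \leq 9$, again leaving $q \in \set{3,7}$. Collecting the cases produces the candidate sets $\set{2,3,7}$ for $N = 3$ and $\set{2,3,5,7}$ for $N = 4$, valid over any imaginary quadratic $F$.

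The one remaining point---and the step I expect to be the main obstacle---is to show that $q = 7$ can occur only for $F = \QQ(\sqrt{-7})$. The difficulty is that the crude inequality $q \leq 2N+1$ is not enough, since it records only the degree $(p-1)/2$ of the irreducible factors of $\Phi_p$ in the split case and so does not by itself see the dependence on $F$. To recover this dependence I would return to Lemma~\ref{lem:cyclotomic}. Since $7 \equiv 3 \bmod 4$ we have $7^* = -7$, so if $F \neq \QQ(\sqrt{-7})$ then $\Phi_7$ is irreducible over $F$ of degree $6$. An element $g$ of order $7$ would then have minimal polynomial $\Phi_7$, forcing $N \geq 6$ by Cayley--Hamilton, which is impossible for $N \in \set{3,4}$. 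Hence $q = 7$ requires $F = \QQ(\sqrt{-7})$; removing $7$ from the candidate sets in every other case yields $\set{2,3}$ for $N = 3$ and $\set{2,3,5}$ for $N = 4$, which completes the proof.
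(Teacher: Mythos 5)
Your proof is correct and is essentially the paper's own argument: the paper simply states that Proposition~\ref{prop:stabprimes} is immediately implied by Lemmas~\ref{lem:cyclotomic} and \ref{lem:orderp}, and your case analysis---the bounds $q \leq N+1$ and $q \leq 2N+1$ for the two congruence classes, together with Lemma~\ref{lem:cyclotomic} to show that $q = 7$ forces $\Phi_7$ to factor and hence $F = \QQ(\sqrt{-7})$---is exactly that implication spelled out. Your one small imprecision (asserting the minimal polynomial of $g$ \emph{is} $\Phi_7$, when it could also be $(x-1)\Phi_7$; what matters is only that $\Phi_7$ divides the characteristic polynomial) is shared by the paper's own proof of Lemma~\ref{lem:orderp} and does not affect the conclusion.
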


In Tables~\ref{tab:diff3-3}--\ref{tab:diff4-4} we give the
factorizations of the orders of the stabilizers of the cells in
$\Sigma^{*}$.  

\section{A mass formula for the Voronoi complex}\label{sec:mass} The
computation of the cell complex is a relatively difficult task and
there are many ways 
that small mistakes in the computation could cause the final answer to be incorrect.
%in which the computation can turn out to be wrong.
Hence it is very important to have checks that allow us to give strong
evidence for the correctness of our computations.  One is that the
complexes we construct actually are chain complexes, namely that their
differentials square to zero.  Another is the \emph{mass formula},
stated in Theorem~\ref{theorem:mass}.  According to this formula, the
alternating sum over the cells of $\Sigma^*$ of the inverse orders of
the stabilizer subgroups must vanish.  A good reference for this
section is \cite[Ch.~IX, \S\S 6--7]{brown}, and we follow it closely.
The main theorem underlying this computation is due to Harder
\cite{harder}.

\subsection{Euler characteristics}\label{subsec:euler}
We begin by recalling some definitions.  The \emph{cohomological
dimension} $\cd\Gamma$ of a group $\Gamma$ is the largest $n\in \ZZ
\cup \{\infty \}$ such that there exists a $\ZZ \Gamma$-module $M$
with $H^{n} (\Gamma ; M)\not = 0$.  The \emph{virtual cohomological
dimension} $\vcd \Gamma$ of $\Gamma$ is defined to be the
cohomological dimension of any torsion-free finite index subgroup of
$\Gamma$ (one can show that this is well-defined).  We recall that
$\Gamma$ is said to be of \emph{finite homological type} if (i) $\vcd
\Gamma <\infty$ and (ii) for every $\ZZ \Gamma$-module $M$ that is
finitely generated as an abelian group, the homology group $H_{i}
(\Gamma ; M)$ is finitely generated for all $i$.

\begin{definition}
Let $\Gamma$ be a torsion-free group of finite homological type, and
let $H_{*} (\Gamma)$ denote the homology of $\Gamma$ with (trivial)
$\ZZ$-coefficients.  The \emph{Euler characteristic} $\chi(\Gamma)$ is
\[\chi(\Gamma) = \sum_{i}(-1)^i\rank_\ZZ(H_i(\Gamma)).\]
\end{definition}

\begin{proposition} \label{prop:euler-torsion}
\rftheoremcite{Theorem 6.3}{brown} If $\Gamma$ is torsion-free and of finite homological type and
$\Gamma' \subseteq \Gamma$ is a subgroup of finite index, then
\[\chi(\Gamma') = [\Gamma \colon \Gamma'] \cdot \chi(\Gamma).\]
\end{proposition}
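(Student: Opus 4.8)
The plan is to reduce the statement to a computation with a finite projective resolution together with the behaviour of $\ZZ$-ranks under restriction of the group ring. Since $\Gamma$ is torsion-free and of finite homological type, we have $\cd\Gamma = \vcd\Gamma < \infty$, and together with the finite generation of the homology this means that $\ZZ$ admits a finite-length resolution $0\to P_m\to\cdots\to P_0\to\ZZ\to 0$ by finitely generated projective $\ZZ\Gamma$-modules (i.e.\ $\Gamma$ is of type $\mathrm{FP}$ over $\ZZ$). First I would record that the Euler characteristic can be read off from any such resolution. Indeed $H_*(\Gamma) = H_*(\ZZ\otimes_{\ZZ\Gamma}P_\bullet)$, and for any bounded complex of finitely generated abelian groups the alternating sum of the ranks of the terms equals the alternating sum of the ranks of the homology (additivity of $\rank_\ZZ$ on short exact sequences). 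Hence
\[\chi(\Gamma) = \sum_i(-1)^i\rank_\ZZ\bigl(\ZZ\otimes_{\ZZ\Gamma}P_i\bigr).\]

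Next I would restrict scalars along $\ZZ\Gamma'\hookrightarrow\ZZ\Gamma$. Because $[\Gamma:\Gamma']<\infty$, the ring $\ZZ\Gamma$ is free as a left (and right) $\ZZ\Gamma'$-module of rank $[\Gamma:\Gamma']$; consequently the restriction of each $P_i$ is again finitely generated and projective over $\ZZ\Gamma'$, and $P_\bullet$ remains a finite projective resolution of $\ZZ$ over $\ZZ\Gamma'$. Applying the formula above for the subgroup gives
\[\chi(\Gamma') = \sum_i(-1)^i\rank_\ZZ\bigl(\ZZ\otimes_{\ZZ\Gamma'}P_i\bigr).\]

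Comparing the two expressions reduces everything to a single identity, which is the heart of the argument: for every finitely generated projective $\ZZ\Gamma$-module $P$,
\[\rank_\ZZ\bigl(\ZZ\otimes_{\ZZ\Gamma'}P\bigr) = [\Gamma:\Gamma']\cdot\rank_\ZZ\bigl(\ZZ\otimes_{\ZZ\Gamma}P\bigr).\]
For a free module $P=(\ZZ\Gamma)^r$ this is immediate, since then $\ZZ\otimes_{\ZZ\Gamma}P=\ZZ^r$, while over $\ZZ\Gamma'$ the module $P$ is free of rank $r[\Gamma:\Gamma']$, so $\ZZ\otimes_{\ZZ\Gamma'}P=\ZZ^{r[\Gamma:\Gamma']}$.

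The main obstacle is the projective (non-free) case, where a naive ``rank times index'' bookkeeping is not available termwise. The clean way to handle it is via the Hattori--Stallings rank: writing $P$ as the image of an idempotent matrix $E$ over $\ZZ\Gamma$ (or over $\QQ\Gamma$ after tensoring with $\QQ$, which changes neither side of the identity), both ranks are computed from the augmentation applied to the trace of $E$, and one checks that passing from $\Gamma$ to $\Gamma'$ multiplies this invariant by the index $[\Gamma:\Gamma']$. Equivalently, both sides define additive functions on finitely generated projectives that agree on free modules, and this trace invariant is what promotes the free case to the general projective case. Substituting the identity into the expression for $\chi(\Gamma')$ and factoring out $[\Gamma:\Gamma']$ then yields $\chi(\Gamma')=[\Gamma:\Gamma']\cdot\chi(\Gamma)$. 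I expect the rank-scaling for projectives to be the only genuinely delicate point; the reduction to a finite projective resolution and the passage between the Euler characteristic of a complex and that of its homology are routine.
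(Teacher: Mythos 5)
The paper does not actually prove this proposition: it is quoted directly from Brown \cite[Theorem~6.3]{brown}, where it is stated exactly under the ``finite homological type'' hypothesis, and both of the reductions you make overstep that hypothesis in an essential way. The first gap is your opening claim that torsion-free plus finite homological type yields type FP over $\ZZ$. Finite homological type says only that $\vcd\Gamma<\infty$ and that $H_i(\Gamma;M)$ is finitely generated whenever $M$ is finitely generated as an abelian group; finite generation of homology groups does not produce finitely generated syzygies, and the condition does not even obviously imply that $\Gamma$ is finitely generated (which is already type $\mathrm{FP}_1$). No implication ``finite homological type $\Rightarrow$ FP'' is known, and Brown formulates the theorem for this class precisely because a resolution of $\ZZ$ by finitely generated projective $\ZZ\Gamma$-modules need not be available. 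What $\cd\Gamma<\infty$ does give (after an Eilenberg swindle) is a finite-length resolution by \emph{free modules of possibly infinite rank}, and for such a resolution your termwise rank bookkeeping is meaningless.

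The second gap is the ``heart'' identity $\rank_\ZZ(\ZZ\otimes_{\ZZ\Gamma'}P)=[\Gamma:\Gamma']\cdot\rank_\ZZ(\ZZ\otimes_{\ZZ\Gamma}P)$ for finitely generated projective $P$: this is not a check one performs, it is (essentially) an open conjecture. Write $P=\mathrm{im}(E)$ for an idempotent matrix $E$ over $\ZZ\Gamma$, and let $r_P(C)$ denote the Hattori--Stallings coefficients of $\mathrm{tr}(E)$, indexed by conjugacy classes $C$ of $\Gamma$. Then $\rank_\ZZ(\ZZ\otimes_{\ZZ\Gamma}P)$ is the \emph{augmentation} $\sum_C r_P(C)$, summed over all classes. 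Under restriction to $\Gamma'$ the identity-class coefficient does scale the way you want, $r_{\Res P}(1)=[\Gamma:\Gamma']\,r_P(1)$, but the contribution of the class of $\gamma\neq 1$ transforms by the number of cosets $t\Gamma'$ with $t^{-1}\gamma t\in\Gamma'$, which is not $[\Gamma:\Gamma']$ in general (for $\Gamma'$ normal it is $0$ whenever $\gamma\notin\Gamma'$). Hence your identity for all projectives amounts to the assertion that the Hattori--Stallings rank is concentrated at the identity class --- the Bass conjecture, open for general torsion-free groups. The fallback argument, that two additive functions agreeing on free modules agree on all finitely generated projectives, fails for the same reason: such functions agree only modulo $\widetilde{K}_0(\ZZ\Gamma)$, whose vanishing for torsion-free $\Gamma$ is likewise conjectural. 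Your argument is correct verbatim for groups of type FL (finite resolution by finitely generated \emph{free} modules) --- which, incidentally, covers what this paper needs, since torsion-free finite-index subgroups of $\GL_N(\OO)$ are FL by Borel--Serre --- but as a proof of the proposition as stated it cannot be repaired along these lines: any proof under the actual hypotheses, including Brown's, must work directly with the finite-homological-type condition (finite-dimensionality of $H_*(\Gamma',\QQ)$, Shapiro's lemma, and the action of the finite quotient after reducing to a normal subgroup) and cannot pass through finitely generated projective resolutions, since none are supplied by the hypotheses.
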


One can use Proposition~\ref{prop:euler-torsion} to extend the notion of Euler
characteristic to groups with torsion.  Namely, if $\Gamma$ is an
arbitrary group of finite homological type with a torsion-free subgroup
$\Gamma'$ of finite index, one sets
\begin{equation}\label{eq:tordef}
\chi(\Gamma) = \frac{\chi(\Gamma')}{[\Gamma \colon \Gamma']}.
\end{equation}
By Proposition \ref{prop:euler-torsion}, this is independent of the
choice of $\Gamma '$.

\begin{proposition}\label{prop:euler-finite}
If $\Gamma$ is a finite group, then $\chi(\Gamma) = 1/|\Gamma|$.  
\end{proposition}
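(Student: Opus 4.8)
The plan is to reduce everything to the torsion-free case already set up, using the most economical choice of finite-index torsion-free subgroup, namely the trivial one. Since $\Gamma$ is finite, the trivial subgroup $\Gamma' = \set{1}$ is a torsion-free subgroup of $\Gamma$ of index $[\Gamma\colon \set{1}] = |\Gamma|$, and it is manifestly of finite homological type (its virtual cohomological dimension is $0$ and all its homology groups are finitely generated). Thus $\Gamma$ is a group of finite homological type possessing a torsion-free finite-index subgroup, so the extended Euler characteristic $\chi(\Gamma)$ of \eqref{eq:tordef} is defined, and by Proposition~\ref{prop:euler-torsion} it is independent of the choice of such subgroup.

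First I would apply the defining formula \eqref{eq:tordef} with $\Gamma' = \set{1}$ to obtain
\[
\chi(\Gamma) = \frac{\chi(\set{1})}{[\Gamma\colon \set{1}]} = \frac{\chi(\set{1})}{|\Gamma|}.
\]
It then remains only to evaluate $\chi(\set{1})$ directly from the definition of the Euler characteristic for torsion-free groups. The homology of the trivial group with trivial $\ZZ$-coefficients is $H_0(\set{1}) = \ZZ$ and $H_i(\set{1}) = 0$ for all $i > 0$, so
\[
\chi(\set{1}) = \sum_i (-1)^i \rank_\ZZ\bigl(H_i(\set{1})\bigr) = 1.
\]
Substituting this into the displayed expression yields $\chi(\Gamma) = 1/|\Gamma|$, as claimed.

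There is essentially no obstacle here: the only thing to check is that $\set{1}$ is a legitimate choice of torsion-free subgroup of finite index and that $\Gamma$ satisfies the hypotheses under which $\chi$ was extended, both of which are immediate for a finite group. The entire content of the statement is the bookkeeping compatibility of the extended definition \eqref{eq:tordef} with the normalization $\chi(\set{1}) = 1$, guaranteed to be well-defined by Proposition~\ref{prop:euler-torsion}.
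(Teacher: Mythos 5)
Your proof is correct and follows exactly the paper's argument: the paper's entire proof is ``Take $\Gamma'$ to be the trivial subgroup in \eqref{eq:tordef}.'' You have simply spelled out the routine details (that $\set{1}$ is torsion-free of finite index and that $\chi(\set{1})=1$) which the paper leaves implicit.
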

\begin{proof}
  Take $\Gamma'$ to be the trivial subgroup in \eqref{eq:tordef}.
\end{proof}

\begin{theorem} \label{theorem:euler-characteristic}
\theoremcite{harder}  Let $F$ be a number field with ring of integers $\OO$, and let
$\zeta_{F} (s)$ be the Dedekind zeta function of $F$.  Then 
\[\chi(\SL_N(\OO)) = \prod_{k = 2}^N \zeta_F(1 - k).\]  
\end{theorem}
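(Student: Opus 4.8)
The statement is Harder's Gauss--Bonnet theorem, so the plan is to follow Harder's argument: reduce to a torsion-free subgroup, integrate the Gauss--Bonnet--Chern Euler form over the associated locally symmetric space, and then evaluate the resulting volume and convert it by the functional equation of $\zeta_F$. First I would fix a torsion-free normal subgroup $\Gamma'\subset \SL_N(\OO)$ of finite index; such a $\Gamma'$ exists because a sufficiently deep principal congruence subgroup is torsion-free (Minkowski). By Proposition~\ref{prop:euler-torsion} and the defining relation~\eqref{eq:tordef}, it suffices to compute $\chi(\Gamma')$ and divide by $[\SL_N(\OO):\Gamma']$. Since $\Gamma'$ is torsion-free and acts freely on the contractible symmetric space $X=G(\RR)/K$ of $G=\Res_{F/\QQ}\SL_N$, the quotient $\Gamma'\backslash X$ is a $K(\Gamma',1)$, so $\chi(\Gamma')$ equals the topological Euler characteristic of the finite-volume manifold $\Gamma'\backslash X$.

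The heart of the matter is the Gauss--Bonnet--Chern formula for this noncompact manifold. I would invoke Harder's theorem that, despite the noncompactness, the Euler characteristic is still computed by integrating the Euler form, giving $\chi(\Gamma'\backslash X)=\chi(X_u)\cdot\mu_{EP}(\Gamma'\backslash G(\RR))$, where $X_u$ is the compact dual of $X$ and $\mu_{EP}$ is the \emph{Euler--Poincar\'e measure} normalized so that the Euler form of $X_u$ integrates to $\chi(X_u)$. The key input here, and what I expect to be the main obstacle, is the vanishing of the boundary contributions at the cusps: this is precisely Harder's analysis of the geometry at infinity via reduction theory and Siegel sets, and it is the point where the argument genuinely uses that $\Gamma'$ is arithmetic rather than merely a finite-covolume lattice.

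It then remains to evaluate the volume $\mu_{EP}(\Gamma'\backslash G(\RR))$. I would do this through the theory of Tamagawa numbers, using Weil's theorem that $\tau(\SL_N)=1$. Since $G=\Res_{F/\QQ}\SL_N$, the Tamagawa measure factors into local measures whose nonarchimedean part produces the Euler product $\prod_{k=2}^{N}\zeta_F(k)$ (up to a power of the discriminant $|d_F|$), while the archimedean factors combine with $\chi(X_u)$ and the volume of $X_u$ into explicit $\Gamma$-factors. Comparing with the fact that the total Tamagawa number is $1$ thus expresses $\chi(\Gamma')$ in terms of $\zeta_F(2),\dots,\zeta_F(N)$ together with archimedean constants.

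Finally I would apply the functional equation $\xi_F(s)=\xi_F(1-s)$ with $\xi_F(s)=|d_F|^{s/2}\Gamma_\RR(s)^{r_1}\Gamma_\CC(s)^{r_2}\zeta_F(s)$ to rewrite each $\zeta_F(k)$ for $2\le k\le N$ in terms of $\zeta_F(1-k)$. The discriminant powers and the ratios of $\Gamma$-factors produced by the functional equation are exactly the archimedean normalizing constants left over from the volume computation and from $\chi(X_u)$, so they cancel and leave $\chi(\SL_N(\OO))=\prod_{k=2}^{N}\zeta_F(1-k)$ after dividing by the index. The secondary difficulty I anticipate is the bookkeeping of matching these archimedean factors exactly; in particular the poles of $\Gamma_\RR$ and $\Gamma_\CC$ at negative integers, which are the trivial zeros of $\zeta_F(1-k)$, must correspond to the vanishing $\chi(X_u)=0$ whenever $G(\RR)$ fails to be of equal rank, which happens at every complex place and for all $N\ge 3$, so that the formula holds uniformly across all $F$ and $N$.
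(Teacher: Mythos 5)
The paper does not prove this theorem at all: it is quoted verbatim from Harder (the citation \cite{harder}), and the only thing the paper itself adds is the observation that each $\zeta_F(1-k)$ vanishes for imaginary quadratic $F$, forcing $\chi(\SL_N(\OO))=0$. Your proposal is a faithful outline of exactly the argument in the cited source --- reduction to a torsion-free congruence subgroup, Harder's Gauss--Bonnet theorem with the cusp contributions killed via Siegel-set analysis, the Tamagawa-number computation with $\tau(\SL_N)=1$ producing $\prod_{k=2}^{N}\zeta_F(k)$, and the functional equation converting this to $\prod_{k=2}^{N}\zeta_F(1-k)$ --- so it takes essentially the same (indeed, the only available) route.
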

In particular, since 
for imaginary quadratic $F$ the Dedekind zeta function
%when $F$ is imaginary quadratic 
$\zeta_{F} (m)$
vanishes for 
all $m \in \mathbb{Z} < 0$, 
%$m$ any negative integer, 
we have that $\chi(\SL_N(\OO)) = 0$
%for
when $N \geq 2$.

\begin{corollary}\label{cor:euler-characteristic-gl}
Let $F$ be an imaginary quadratic field with ring of integers $\OO$.
Then $\chi(\GL_N(\OO)) = 0$ when $N \geq 2$.  
\end{corollary}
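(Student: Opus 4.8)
The plan is to deduce the vanishing for $\GL_N(\OO)$ from the vanishing for $\SL_N(\OO)$ established just above, by exploiting that $\SL_N(\OO)$ sits inside $\GL_N(\OO)$ as a subgroup of finite index together with the multiplicativity of the Euler characteristic in that index.

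First I would record the short exact sequence
\[
1 \longrightarrow \SL_N(\OO) \longrightarrow \GL_N(\OO)
\xrightarrow{\ \det\ } \OO^\times \longrightarrow 1,
\]
where the determinant is surjective because the diagonal matrix $\mathrm{diag}(u,1,\dots,1)$ realizes any unit $u \in \OO^\times$. Since $F$ is imaginary quadratic, Dirichlet's unit theorem gives that $\OO^\times$ is finite (it has rank $0$, being $\{\pm 1\}$ in general and of order $4$ or $6$ for $D=-4,-3$ respectively). Hence $\SL_N(\OO)$ is a normal subgroup of $\GL_N(\OO)$ of finite index $[\GL_N(\OO):\SL_N(\OO)] = |\OO^\times|$.

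Next I would invoke the multiplicativity of the Euler characteristic. Both $\GL_N(\OO)$ and $\SL_N(\OO)$ are arithmetic groups, hence of finite homological type and virtually torsion-free, so their Euler characteristics are defined through \eqref{eq:tordef}. The extension of Proposition~\ref{prop:euler-torsion} to groups with torsion then yields
\[
\chi(\SL_N(\OO)) = [\GL_N(\OO):\SL_N(\OO)]\cdot\chi(\GL_N(\OO)) = |\OO^\times|\cdot\chi(\GL_N(\OO)).
\]
By Theorem~\ref{theorem:euler-characteristic} the left-hand side vanishes for $N\geq 2$, and since $|\OO^\times|$ is a nonzero integer we conclude $\chi(\GL_N(\OO))=0$.

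The one step requiring care---the main obstacle---is justifying that the multiplicativity formula of Proposition~\ref{prop:euler-torsion}, stated there for torsion-free $\Gamma$, persists for the extended Euler characteristic of groups with torsion. I would handle this by choosing a torsion-free finite-index subgroup $\Lambda \subseteq \SL_N(\OO)$ (which exists since $\GL_N(\OO)$ is virtually torsion-free) and expanding both sides through \eqref{eq:tordef}. From $\chi(\Lambda) = [\GL_N(\OO):\Lambda]\,\chi(\GL_N(\OO))$ and $\chi(\Lambda) = [\SL_N(\OO):\Lambda]\,\chi(\SL_N(\OO))$, together with the tower identity $[\GL_N(\OO):\Lambda] = [\GL_N(\OO):\SL_N(\OO)]\,[\SL_N(\OO):\Lambda]$, one may cancel the common nonzero factor $[\SL_N(\OO):\Lambda]$ to obtain the displayed relation. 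Everything else is formal.
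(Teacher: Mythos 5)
Your proof is correct and follows essentially the same route as the paper: the paper's own proof also deduces the vanishing from Theorem~\ref{theorem:euler-characteristic} together with the fact that $\SL_N(\OO)$ has finite index in $\GL_N(\OO)$, using the multiplicativity built into the definition \eqref{eq:tordef} of the Euler characteristic for groups with torsion. Your write-up simply makes explicit the details the paper leaves implicit (finiteness of $\OO^\times$ via Dirichlet, and the formal cancellation argument through a common torsion-free subgroup $\Lambda$), both of which are handled correctly.
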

\begin{proof}
This follows immediately from Theorem~\ref{theorem:euler-characteristic}
and the definition of the Euler characteristic since $\SL_N(\OO)$ is of
finite index in $ \GL_N(\OO)$.
\end{proof}

Now we turn to a different concept, the \emph{equivariant Euler
characteristic} $\chi_{\Gamma} (X)$ of $\Gamma$. Here $X$ is any cell
complex with
$\Gamma$ action such that (i) $X$ has finitely many cells mod
$\Gamma$, and (ii) for each $\sigma\in X$, the stabilizer subgroup
$\Stab_{\Gamma } (\sigma)$ is finite.  One defines
\[
\chi_{\Gamma} (X) = \sum_{\sigma \in S} (-1)^{\Dim \sigma} 
\chi(\Stab_{\Gamma }(\sigma)),
\]
where $S$ is a set of representatives of cells of $X$ mod $\Gamma$.

\subsection{The mass formula}\label{subsec:mass}
The well-rounded retract $W$ defined in \S\ref{subsec:cell-complexes}
is a proper, contractible $\Gamma$-complex and so its equivariant 
Euler characteristic is defined.  We compute its equivariant Euler
characteristic, phrased in terms of cells in $\Sigma^*$ using
Remark~\ref{rem:W-Sigma}, to get a mass formula. 

\begin{theorem}[Mass Formula] \label{theorem:mass}
We have 
\[\sum_{\sigma \in \Sigma^*_k} (-1)^k\frac{1}{|\Stab_{\Gamma }(\sigma)|} = 0.\]  
\end{theorem}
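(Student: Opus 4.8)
The plan is to identify the alternating sum in the statement with the equivariant Euler characteristic $\chi_\Gamma(W)$ of the well-rounded retract, and then to evaluate that Euler characteristic in two different ways. First I would record that $W$ satisfies the hypotheses needed for $\chi_\Gamma(W)$ to be defined: as noted just before Proposition~\ref{prop:steinberg-group}, $W$ is a proper, contractible $(N^2-N)$-dimensional $\Gamma$-complex with finitely many cells modulo $\Gamma$ and finite cell stabilizers. Under these hypotheses the standard comparison theorem for equivariant Euler characteristics of contractible complexes (see \cite[Ch.~IX]{brown}) gives $\chi_\Gamma(W) = \chi(\Gamma)$, where $\chi(\Gamma)$ is the Euler characteristic defined in \eqref{eq:tordef}. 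Since $\Gamma = \GL_N(\OO)$ with $N \geq 2$, Corollary~\ref{cor:euler-characteristic-gl} yields $\chi(\Gamma) = 0$, whence $\chi_\Gamma(W) = 0$.

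Next I would expand $\chi_\Gamma(W)$ cell by cell. By definition
\[
\chi_\Gamma(W) = \sum_{\sigma(M)} (-1)^{\dim \sigma(M)}\, \chi(\Stab_\Gamma(\sigma(M))),
\]
the sum running over representatives of the cells of $W$ modulo $\Gamma$. Each stabilizer is finite, so Proposition~\ref{prop:euler-finite} replaces $\chi(\Stab_\Gamma(\sigma(M)))$ by $1/|\Stab_\Gamma(\sigma(M))|$. It then remains to re-index this sum over $\Sigma^*$: Theorem~\ref{theorem:duality} together with Remark~\ref{rem:W-Sigma} supplies an inclusion-reversing bijection between cells of $W$ modulo $\Gamma$ and cells of $\Sigma^*$ that preserves the isomorphism type, hence the order, of the stabilizers, so the orders $1/|\Stab|$ match term by term.

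The one point requiring care---and the step I expect to be the main obstacle---is the bookkeeping of dimensions under this duality, since the two sides carry the sign $(-1)^{\dim}$ with respect to \emph{different} dimension functions. I would compute the comparison directly. A cell $\sigma(M) \subset W$ is cut out inside the open cone $C_N \subset \cH^N(\CC) \cong \RR^{N^2}$ by the affine conditions $\ip{A, q(v)} = 1$ for $v \in M$, so $\dim \sigma(M) = N^2 - r$, where $r = \dim_\RR \Span_\RR\set{q(v) \mid v \in M}$. On the dual side the cone $C_\sigma$ spanned by $\set{q(v) \mid v \in M}$ has dimension $r$, and its image $\sigma = \pi(C_\sigma)$ in $X^*_N$ has dimension $r-1$, because $\pi$ quotients by the one-dimensional homothety group $H$. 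Thus if $\sigma \in \Sigma^*_k$ then $r = k+1$ and $\dim \sigma(M) = N^2 - 1 - k$, giving $(-1)^{\dim \sigma(M)} = (-1)^{N^2-1}(-1)^{k}$. Substituting everything,
\[
0 = \chi_\Gamma(W) = (-1)^{N^2-1}\sum_{k} (-1)^{k} \sum_{\sigma \in \Sigma^*_k} \frac{1}{|\Stab_\Gamma(\sigma)|}.
\]
Since $(-1)^{N^2-1}$ is a fixed nonzero sign, the double sum vanishes, which is exactly the asserted mass formula.
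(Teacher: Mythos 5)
Your proposal is correct and follows essentially the same route as the paper: identify the alternating sum with the equivariant Euler characteristic $\chi_{\Gamma}(W)$ of the well-rounded retract, invoke the comparison theorem from \cite{brown} to get $\chi_{\Gamma}(W)=\chi(\Gamma)$, and conclude via Corollary~\ref{cor:euler-characteristic-gl}, Proposition~\ref{prop:euler-finite}, and the identification of cells furnished by Theorem~\ref{theorem:duality} and Remark~\ref{rem:W-Sigma}. The one point where you go beyond the paper's write-up is the explicit verification that the duality reverses dimensions, $\dim\sigma(M)=N^2-1-k$ for $\sigma\in\Sigma^*_k$, so that the two alternating sums agree up to the global sign $(-1)^{N^2-1}$; the paper leaves this sign bookkeeping implicit, and your computation is a worthwhile supplement rather than a divergence in method.
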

\begin{proof}
Let $\Gamma = \GL_{N} (\OO)$.  By Proposition \ref{prop:euler-finite}
we have $\chi (\Stab_{\Gamma } (\sigma)) = 1/|\Stab_\Gamma (\sigma)|$.
Thus the result follows from Corollary
\ref{cor:euler-characteristic-gl} if we can show $\chi_{\Gamma} (W) =
\chi (\Gamma)$, where $W$ is the $\Gamma$-complex formed from the
well-rounded retract, using Theorem~\ref{theorem:duality} and
Remark~\ref{rem:W-Sigma} to identify cells of $W$ mod $\Gamma$ with
cells in $\Sigma^*$.  But according to \cite[Proposition 7.3
(e')]{brown}, this equality is true if $\Gamma$ has a torsion-free
subgroup of finite index, which is a standard fact for $\Gamma$.
\end{proof}

Using the stabilizer information in
Tables~\ref{tab:diff3-3}--\ref{tab:diff4-4} one can easily verify that
$\chi(\GL_n(\OO)) = 0$ for each of our examples.  For instance, if we
add together the terms $1/|\Stab_{\Gamma }(\sigma)|$ for cells $\sigma$ of the
same dimension to a single term for $\GL_4(\OO_{-4})$, we find
(ordering the terms by increasing dimension in $\Sigma^{*}$)
\begin{multline*}
-\frac{11}{3072} +\frac{127}{960} -\frac{4187}{2304} +\frac{28375}{2304} -\frac{868465}{18432} +\frac{126127}{1152} -\frac{81945}{512}\\ +\frac{340955}{2304} -\frac{48655}{576} +\frac{16075}{576} -\frac{21337}{4608} +\frac{101}{384} -\frac{17}{92160}  = 0.
\end{multline*}
The other groups have been checked similarly.

\section{Explicit homology classes}\label{sec:explicit}

By Theorem~\ref{theorem:groupcoh} we have $H_{N^2 - 1}(\Vor_{N,D} \otimes
\QQ) \simeq H^0(\GL_N(\OO), \QQ)$, which in turn is isomorphic to
$\QQ$.  This suggests that there should be a canonical generator for
this homology group, a fact already explored in \cite[Section
5]{PerfFormModGrp}.  An obvious choice is the analogue of the chain
presented there, namely
\[
\xi := \xi_{N,D} := \sum_\sigma \frac{1}{|\Stab(\sigma)|} [\sigma],
\]
where $\sigma$ runs through the cells in $\Sigma_{N^2 -
1}(\GL_N(\OO))$.  In this section we verify that this is true for our
examples.  We should point out that all the cells in $\Sigma_{N^2 -
1}(\GL_N(\OO))$ are orientable. The reason is that the group
$\GL_n(\CC)$ is connected and so the determinant of its action on
$\cH^N(\CC)$ is positive.  Since the stabilizers are included in
$\GL_N(\OO)$ and the faces in $\Sigma_{N^2 - 1}(\GL_N(\OO))$ are
full-dimensional the orientation has to be preserved.

\begin{theorem}\label{theorem:explicit}
When $N = 3$ and $D\geq -24$ or $N = 4$ and $D=-3,-4$, the chain $\xi$
is a cycle and thus generates $H_{N^2 - 1}(\Vor_{N,D} \otimes \QQ)$.  
\end{theorem}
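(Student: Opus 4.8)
The plan is to reduce the statement to the single assertion that $d(\xi)=0$, and then to establish that by a weighted cancellation at each interior wall. First observe that generation is automatic once $\xi$ is a cycle: the symmetric space $X_N$ has dimension $N^2-1$, so $\Vor_{N,D}$ vanishes in degrees above $N^2-1$ and there are no $(N^2-1)$-boundaries. Hence $H_{N^2-1}(\Vor_{N,D}\otimes\QQ)=\ker d_{N^2-1}$, and by Theorem~\ref{theorem:groupcoh} this group is isomorphic to $H^0(\GL_N(\OO),\QQ)\cong\QQ$. Since every coefficient $1/|\Stab(\sigma)|$ of $\xi$ is strictly positive, $\xi$ is a nonzero chain, so as soon as it is a cycle it spans the one-dimensional top homology.

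To compute $d(\xi)$ I would work over $\QQ$, where $\Vor_{N,D}\otimes\QQ$ is identified (as in the proof of Proposition~\ref{prop:Voronoi-equivariant}) with the $\Gamma$-coinvariants of the relative cellular chain complex $C_*(X_N^*,\partial X_N^*)\otimes\QQ$: an orientable cell $\sigma\in\Sigma_k$ corresponds to the image of an oriented representative cone, a non-orientable cell contributes $0$, and the differential is the push-forward of the geometric cellular boundary. Fixing the ambient orientation on every top cell (legitimate since $\GL_N(\CC)$, and hence $\Gamma$, acts orientation-preservingly by Remark~\ref{rem:orientable}) and a $\Gamma$-consistent orientation on the orbit of each codimension-one cell, the differential reads $d[\sigma]=\sum_{w}\operatorname{sgn}(w)\,[\tau(w)]$, the sum running over the interior facets $w$ of the representative of $\sigma$, where $\tau(w)\in\Sigma_{N^2-2}$ is the $\Gamma$-class of $w$ and $\operatorname{sgn}(w)=\pm1$ is the incidence number. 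Facets lying in $\partial X_N^*$ vanish in the relative complex, so only interior walls contribute.

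The heart of the argument is the computation of the coefficient $c_\tau$ of a fixed orientable wall $\tau\in\Sigma_{N^2-2}$. Two facts drive the cancellation. Geometrically, an interior codimension-one cell is a face of exactly two top cells $\Omega^+,\Omega^-$, which receive opposite incidence signs $\pm\epsilon_0$ from the coherent orientation. Group-theoretically, orientability of $\tau$ forces every $g\in\Stab(\hat\tau)$ to preserve the orientation of $\hat\tau$ and, since it also preserves the ambient orientation, to preserve the transverse orientation; hence no element of $\Stab(\hat\tau)$ swaps the two sides, and $\Stab(\hat\tau)\subseteq\Stab(\Omega^+)\cap\Stab(\Omega^-)$. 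Consequently the set of incident flags $(\Omega,w)$ with $w\in\Gamma\hat\tau$ breaks into exactly two $\Gamma$-orbits, one per side. Grouping the defining sum of $c_\tau$ by $\Stab(\hat\sigma)$-orbits of facets, using $|\text{orbit}|=|\Stab(\hat\sigma)|/|\Stab(\hat\sigma)\cap\Stab(w)|$ together with the conjugacy identity $a\big(\Stab(\hat\sigma)\cap\Stab(a^{-1}\hat\tau)\big)a^{-1}=\Stab(\Omega)\cap\Stab(\hat\tau)=\Stab(\hat\tau)$, the weight $1/|\Stab(\hat\sigma)|$ collapses each side's contribution to $\pm\epsilon_0/|\Stab(\hat\tau)|$. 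These cancel, so $c_\tau=0$ and $d(\xi)=0$.

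The main obstacle I anticipate is the orientation and stabilizer bookkeeping in the last step---keeping the signs $\operatorname{sgn}(w)$ and the indices $[\Stab(\hat\sigma):\Stab(\hat\sigma)\cap\Stab(w)]$ consistent across the $\Gamma$-action, which is precisely where orientability of the codimension-one cells enters. As this reasoning is uniform, it proves the theorem for all $(N,D)$ in range at once; for the specific cases $N=3$, $D\ge-24$ and $N=4$, $D=-3,-4$ one can in addition read $d(\xi)=0$ directly off the explicitly computed boundary maps and stabilizer orders in Tables~\ref{tab:diff3-3}--\ref{tab:diff4-4}, giving an independent numerical check in the same spirit as the mass formula of Theorem~\ref{theorem:mass}.
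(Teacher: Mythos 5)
Your proof is correct, but it takes a genuinely different route from the paper's. The paper's proof of Theorem~\ref{theorem:explicit} is a case-by-case computational verification: for each pair $(N,D)$ in the statement one inspects the explicitly computed matrix of $d_{N^2-1}$, observes that every nonzero row has exactly two nonzero entries, each of absolute value $|\Stab(\sigma_j)|/|\Stab(\tau_i)|$, and checks that orientations can be chosen so that the two entries in a given row have opposite signs (the displayed $2\times 2$ matrix $d_{15}$ for $\GL_4(\OO_{-4})$ is exactly this check). Your argument derives this row structure \emph{a priori} rather than reading it off the computation: the two $\Gamma$-orbits of incident flags, the inclusion $\Stab(\hat\tau)\subseteq\Stab(\Omega^{+})\cap\Stab(\Omega^{-})$ forced by orientability of $\hat\tau$ together with the orientation-preservation of Remark~\ref{rem:orientable}, and orbit--stabilizer counting give the coefficient $(\epsilon^{+}+\epsilon^{-})/|\Stab(\hat\tau)|$, after which ambient-coherent orientations on the (automatically orientable) top cells force $\epsilon^{+}=-\epsilon^{-}$; this is in effect the statement that $X_N^*/\Gamma$ is an \emph{oriented} orbifold-with-boundary, so its rational relative fundamental cycle exists. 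What the paper's approach buys is certainty for the listed cases with no geometric overhead beyond the computed matrices; what yours buys is uniformity in $(N,D)$ --- written out carefully, it proves the statement for \emph{all} $N$ and $D$, i.e.\ it would settle the conjecture in the remark immediately following Theorem~\ref{theorem:explicit}, which the authors state they cannot prove. Precisely for that reason you should promote to explicit lemmas the geometric facts your argument uses silently: (i) the perfect-cone decomposition is face-to-face and locally finite on $X_N$, so that an interior codimension-one cell is a facet of exactly two top cells; (ii) the relative interior of any cell meeting $X_N$ lies entirely in $X_N$ (so the ``two sides'' of $\hat\tau$ are globally well defined along a connected set; this follows from convexity, since a convex combination of a positive definite and a positive semidefinite form with positive weight on the former is positive definite); and (iii) incidence signs are constant on $\Stab(\hat\sigma)$-orbits of facets, which uses orientability of the top cells. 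All three hold in this setting, and it is exactly the orientation-preservation special to the imaginary quadratic case (Remark~\ref{rem:orientable}) that makes the scheme work: over $\QQ$, where $\GL_N(\ZZ)$ contains orientation-reversing elements, top cells need not be orientable and the uniform cancellation fails, which is presumably why the literature this paper follows verifies such statements computationally.
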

\begin{proof}

The proof is an explicit computation with differential matrix $A$
representing the map $V_{N^{2}-1} (\Gamma)\rightarrow V_{N^{2}-2}
(\Gamma)$ (cf.~\S \ref{ss:vor}).  Note that the signs of the entries
of $A$ depend on a choice of orientation for each of the cells in
$\Sigma_{N^2 - 1}$ and $\Sigma_{N^2 - 2}$.  In each non-zero row of
$A$, there are exactly two non-zero entries.  Each non-zero entry
$A_{i,j}$ has absolute value $|\Stab(\sigma_j)|/|\Stab(\tau_i)|$,
where $\sigma_j \in \Sigma_{N^2 - 1}(\GL_N(\OO))$ and $\tau_i \in
\Sigma_{N^2 - 2}(\GL_N(\OO))$.  One then checks that there is a choice
of orientations such that the non-zero entries in a given row have
opposite signs.
\end{proof}

For example, consider $N = 4$ and $D = -4$.  The differential matrix is 
\[d_{15} = \begin{bmatrix}
0 & 0\\ 1920 & -256
\end{bmatrix},\] with kernel generated by $(2, 15) = 92160
({1}/{46080}, {1}/{6144})$.  The orders of the two stabilizer groups
for the cells in $\Sigma_{15}(\GL_N(\OO_{-4}))$ are $46080$ and
$6144$, respectively, and thus $\xi_{4,-4}$ is a cycle.

\begin{remark}
It seems likely that Theorem~\ref{theorem:explicit} holds for all $\GL_{N}
(\OO_{D})$, although we do not presently have a proof.
\end{remark}

\section{Results and tables}\label{sec:tables}

We conclude by presenting the results of our computations.  First we
summarize the most interesting numerical results from the tables,
namely those involving the homology computations for $\GL_*(\OO_D)$.

\begin{theorem}
Let $0>D\geq -24$ be a fundamental discriminant.
\begin{enumerate}
\item The Voronoi homology $H_n(\Vor_{3,D})$ can have non-trivial rank
only for $n=2,4,5$ and $8$.
\item In each case for $n=8$ we have $H_8(\Vor_{3,D})\cong \ZZ$.
\item In homology degree $2$, $4$ and $5$, we have the following table
  of ranks (empty slots being zero) 
\[
\begin{array}{|c|rrrrrrrrrr|}
\hline
D \phantom{\Big|} & -3 &-4 &-7 &-8 &-11 &-15 &-19 &-20 &-23 &-24  \\
\hline
\#\{\text{perf.~forms}\} \phantom{\Big|}  &2&1&2&2&12&90&157&212&870&596  \\
\hline
\rank\ H_2\phantom{\big|} & &&&&&&&1&4&1\\
\rank\ H_4\phantom{\big|} &1 &1&1&1&1&3&2&4&5&5\\
\rank\ H_5\phantom{\big|} & 1&1&2&2&2&5&3&6&1&7\\
%\hline
%\text{2-rk }$>0$&  &5(1)& &4(1),5(1)&5(1)&4(4),7(3)&5(2),7(3)&3(1),4(4), 7(3)& 3(1),4(13),5(5),7(5)& 3(7),4(5),5(10),6(1),7(6) \\
%\text{3-rk }$>0$& 7(1) &&7(1) && 7(2)&7(5)&7(4)&5(1), 7(5)& 3(3),7(7)& 7(5) \\
\hline
\end{array}
  \]

\medskip
\item The $p$-Sylow subgroups for the primes occurring in (the orders of) the respective
  homology groups are given in the following table, where we use
  the shorthand $n( G)$ to denote that the $p$-Sylow subgroup in
  $H_n(\Vor_{3,D})$ is equal to $G$. 
 \def \Z {\mathbb Z}
\[
\begin{array}{|c|cccccccc|}
\hline
D \phantom{\Big|} & -3&-4&-7&-8&-11&-15&-19&-20\\ %% $-19$&$-20$&$-23$&$-24$ \\
\hline
\Syl_2&  & 5(\Z_2)& & 4(\Z_2), 5(\Z_2)& 5(\Z_4) & 4(\Z_4) & 5(\Z_2\times \Z_8)&3(\Z_2),4(\Z_2^4), 7(\Z_2^2\times \Z_4)  \\  %% &5(2),7(3)&3(1),4(4), 7(3)\\ %%& 3(1),4(13),5(5),7(5)& 3(7),4(5),5(10),6(1),7(6) \\
\Syl_3& 7(\Z_9) &&7(\Z_3) && 7(\Z_3^2)&7(\Z_3^5)&7(\Z_3^4)&5(\Z_3), 7(\Z_3^5)\\ %%&7(4)&5(1), 7(5)& 3(3),7(7)& 7(5) \\
\Syl_5&  && && &7(\Z_5)&&\\
\Syl_7&  && 5(\Z_7)&& &&& 7(\Z_7)\\
\hline
\end{array}
\]

%\smallskip
\[
\begin{array}{|c|cc|}
\hline
D \phantom{\Big|} &-23&-24 \\
\hline
\Syl_2
& 3(\Z_2),4(\Z_2^{13}),5(\Z_2^5),7(\Z_2^3\times\Z_4^2)& 3(\Z_2^7),4(\Z_2^5),5(\Z_2^8\times \Z_4^2),6(\Z_2),7(\Z_2^4\times\Z_4^2) \\
\Syl_3
& 4(\Z_3^3),7(\Z_3^7)& 7(\Z_3^3) \\
\hline
\end{array}
\]

%Moreover, there are non-trivial $5$-Sylow subgroups $(\Z_5)$ for $H_7(\Vor_{3,-15})$ and $7$-Sylow subgroups $(\Z_7$ each$)$ for
%$H_5(\Vor_{3,-7})$ and $H_7(\Vor_{3,-20})$.

\medskip
\medskip

\begin{comment}

%\medskip
\item The $p$-torsion ranks for $p=2$ and $3$ for the respective
  homology groups are comprised in the following table, where we use
  the shorthand $n( r)$ if the $p$-rank in
  $H_n(\Vor_{3,D})$ equals $r$. 
\[
\begin{array}{|c|cccccc|}
\hline
D \phantom{\Big|} & -3&-4&-7&-8&-11&-15\\ %% $-19$&$-20$&$-23$&$-24$ \\
\hline
2\text{-}\rank >0&  &5(1)& &4(1),5(1)&5(1)&4(4),7(3)\\ %% &5(2),7(3)&3(1),4(4), 7(3)\\ %%& 3(1),4(13),5(5),7(5)& 3(7),4(5),5(10),6(1),7(6) \\
3\text{-}\rank >0& 7(1) &&7(1) && 7(2)&7(5)\\ %%&7(4)&5(1), 7(5)& 3(3),7(7)& 7(5) \\
\hline
\end{array}
\]

\[
\begin{array}{|c|cccc|}
\hline
D \phantom{\Big|} &-19&-20&-23&-24 \\
\hline
2\text{-}\rank >0&5(2),7(3)&3(1),4(4), 7(3) & 3(1),4(13),5(5),7(5)& 3(7),4(5),5(10),6(1),7(6) \\
3\text{-}\rank >0&7(4)&5(1), 7(5)& 3(3),7(7)& 7(5) \\
\hline
\end{array}
\]
%\medskip
\item Finally, the $p^k$-subgroups for $p^{k}>3$ and $k>0$ are as follows.
\smallskip
\begin{itemize}
\item  $D=-3$, $H_7 > \ZZ/9\ZZ $.
\item  $D=-7$, $H_5 > \ZZ/7\ZZ$.
\item  $D=-8$, $H_5 > \ZZ/4\ZZ$.
\item  $D=-11$, $H_5 > \ZZ/4\ZZ$.
\item  $D=-15$, $H_7 > \ZZ/5\ZZ$.
\item  $D=-19$, $H_5 > \ZZ/8\ZZ$.
\item  $D=-20$, $H_7 > \ZZ/7\ZZ$.
\item  $D=-23$, $H_7 > (\ZZ/4\ZZ)^2$.
\item  $D=-24$, $H_5\,,H_7  > (\ZZ/4\ZZ)^2$.
\end{itemize}
 \end{comment}
\end{enumerate}
\end{theorem}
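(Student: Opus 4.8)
The statement is a summary of explicit machine computations, so the proof consists of constructing the Voronoi chain complex $\Vor_{3,D} = (V_*(\Gamma),d_*)$ for each of the ten discriminants and then computing its integral homology; note that the theorem is literally about the homology of this concrete finite complex, so no Serre-class caveat enters the statement itself. The plan is to separate the work into a geometric phase, in which we build $(V_*(\Gamma),d_*)$ from the reduction theory of Section~\ref{sec:polyhedralcone}, and an algebraic phase, in which we compute $\ker d_n/\operatorname{im} d_{n+1}$. Before either, I would record why the relevant degree range is $2\le n\le 8$: the top cells of $X^*_3$ are the $(N^2-1)=8$-dimensional perfect cones $\sigma(A)$, while by the inclusion-reversing duality of Theorem~\ref{theorem:duality} the lowest-dimensional cells meeting $X_3$ are dual to the top cells of the well-rounded retract $W$, which has dimension $N^2-N=6$; hence $V_n(\Gamma)=0$ unless $2\le n\le 8$, and statement~(1) then amounts to checking that the computed rational homology is concentrated in degrees $2,4,5,8$.

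For the geometric phase, I would first run the generalized Voronoi algorithm of \cite{aim-voronoi} to enumerate, up to the action of $\GL_3(\OO)$, all perfect Hermitian forms over $F=\QQ(\sqrt{D})$ together with their sets of minimal vectors $M(A)$; finiteness is guaranteed by the Voronoi--Ash--Koecher theorem recalled in Section~\ref{sec:polyhedralcone}, and the resulting counts are recorded in the second row of the table in statement~(3). From each perfect cone I would enumerate all of its faces, and using $M(\tau)\cap M(\tau')=M(\tau\cap\tau')$ together with Theorem~\ref{theorem:duality}, classify the cells of each dimension up to $\GL_3(\OO)$-equivalence, computing the finite stabilizer $\Stab(\sigma)$ of each representative; Proposition~\ref{prop:stabprimes} restricts the possible prime orders and so serves as a check on the stabilizer orders, whose factorizations appear in Tables~\ref{tab:diff3-3}--\ref{tab:diff4-4}. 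I would then discard the non-orientable cells (using Remark~\ref{rem:orientable} to decide orientability from the action of $\Stab(\sigma)$) to obtain the generating sets $\Sigma_n$ of $V_n(\Gamma)$, and assemble the differentials $d_n$ by the recipe of \cite[\S3.1]{PerfFormModGrp}, fixing an orientation of each cell and weighting incidences by the stabilizer indices.

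With the complex in hand, the algebraic phase is a Smith normal form computation: the $\QQ$-ranks of the $d_n$ give the Betti numbers in statement~(3) and the locations of the nonzero ranks in statement~(1), while the elementary divisors give the torsion subgroups whose $p$-Sylow parts are tabulated in statement~(4). The claim $H_8(\Vor_{3,D})\cong\ZZ$ of statement~(2) I would obtain directly rather than as a mere rank count: since $V_9(\Gamma)=0$ we have $H_8(\Vor_{3,D})=\ker d_8$, a subgroup of the free abelian group $V_8(\Gamma)$, hence free, and it has rank one by Theorem~\ref{theorem:explicit} (the top cells are all orientable and $\xi_{3,D}$ is a cycle generating $H_8(\Vor_{3,D}\otimes\QQ)$), so it is isomorphic to $\ZZ$. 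Two independent consistency checks should be run throughout: that $d_{n-1}\circ d_n=0$, and the mass formula of Theorem~\ref{theorem:mass}, namely $\sum_{\sigma\in\Sigma^*_k}(-1)^k/|\Stab(\sigma)|=0$, verified numerically as in the $\GL_4(\OO_{-4})$ example of Section~\ref{sec:mass}.

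The main obstacle is not any single conceptual step but the scale and trustworthiness of the computation: for the larger discriminants (e.g.\ $870$ perfect forms when $D=-23$) the cell complex in dimension up to $8$ is very large, and the delicate points are guaranteeing that the Voronoi enumeration is complete, that cells are correctly identified modulo $\GL_3(\OO)$, and that the orientation signs in $d_n$ are globally consistent. This is precisely why the two checks above are essential: agreement with the mass formula together with the vanishing $d^2=0$ give strong evidence that the cell enumeration, the stabilizer orders, and the boundary maps are all correct, after which the homology is determined by routine linear algebra over $\ZZ$.
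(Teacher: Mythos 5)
Your proposal is correct and follows essentially the same route as the paper: the theorem is a summary of machine computations, and the paper's ``proof'' is precisely the pipeline you describe — enumerate perfect forms via the generalized Voronoi algorithm, classify cells and stabilizers modulo $\GL_3(\OO)$, discard non-orientable cells, compute the differentials and their Smith normal forms (the data recorded in Tables~\ref{tab:diff3-3}--\ref{tab:diff3-24}), with $d^2=0$ and the mass formula of Theorem~\ref{theorem:mass} as correctness checks. Your derivation of statement~(2) from $H_8=\ker d_8$ being free together with Theorem~\ref{theorem:explicit} is a nice explicit justification of what the paper simply reads off from the tables, but it is the same underlying argument.
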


\begin{theorem}
Let $V_{4}$ be either the Voronoi complex for $\GL_{4} (\OO_{-3})$ or
$\GL_{4} (\OO_{-4})$.  Then 
 $H_n(V_{4})$ has non-trivial rank only for $n=6,\,7,\,9,\,10,\,12$, and $15$.
 More precisely,
 \begin{enumerate}
\item For $n=3k$ we have $\rank H_{3k}(V_{4})= \begin{cases}1 &\text{if }k>1,\\
0& \text{if }k=1.\end{cases}$
\item For $n=7$ we have $\rank H_{n}(V_{4})=2\,$.
\item For $n=10$ we have $\rank H_{n}(V_{4})=1\,$.
\end{enumerate}
Moreover, the only torsion primes appearing 
are 2, 3 and  5. 
\end{theorem}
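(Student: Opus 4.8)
The plan is to prove the theorem by direct computation with the Voronoi complex $\Vor_{4,D} = (V_*(\Gamma), d_*)$ for $\Gamma = \GL_4(\OO_{-3})$ and $\GL_4(\OO_{-4})$, exploiting the finiteness of perfect forms modulo $\Gamma$ guaranteed by the theory of Voronoi, Ash, and Koecher (\cite{VoronoiI,AMRT_SmoothCompactification,koecher}). First I would enumerate the top-dimensional cells, that is, the $\Gamma$-equivalence classes of perfect Hermitian forms over $F$, using the generalization of Voronoi's neighbor algorithm of \cite{aim-voronoi}: starting from a single perfect form $A$, one repeatedly computes the facets of its perfect cone $\sigma(A)$, crosses each facet to the adjacent perfect form, and tests the new forms for $\Gamma$-equivalence until the list closes up. This produces a finite set of perfect forms, recorded together with their minimal vector sets $M(A)$.

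Next I would build the entire cell complex by descending through the face lattice of the perfect cones. For each dimension $n$ from $N-1 = 3$ up to $N^2-1 = 15$, I would compute the set $\Sigma_n^*$ of $\Gamma$-equivalence classes of $n$-cells meeting $X_N$, together with the stabilizer $\Stab_\Gamma(\sigma)$ of each, using the duality with the well-rounded retract from Theorem~\ref{theorem:duality} and Remark~\ref{rem:W-Sigma} to organize the bookkeeping. I would then test each cell for orientability---whether every element of its stabilizer acts orientation-preservingly---to extract the subset $\Sigma_n$ of orientable cells generating $V_n(\Gamma)$, and record the incidence data and chosen orientations needed to write down the boundary maps $d_n$ of \cite[\S3.1]{PerfFormModGrp}.

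With the complex in hand the homology becomes integer linear algebra. I would assemble the matrices of the $d_n$ and first confirm the two independent correctness checks available: that $d_{n-1}d_n = 0$, and that the stabilizer orders obey the mass formula of Theorem~\ref{theorem:mass}. I would then compute the ranks of the $d_n$ over $\QQ$ and obtain the Betti numbers from $\rank H_n(V_4) = \rank V_n(\Gamma) - \rank d_n - \rank d_{n+1}$, yielding statements (1)--(3) and the vanishing of the rank in all remaining degrees; these ranks can be cross-checked against Borel's stable-cohomology predictions through the isomorphism $H_n(\Vor_{4,D}) \simeq H^{15-n}(\Gamma)$ modulo $\cS_b$ of Theorem~\ref{theorem:groupcoh}. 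For the torsion I would compute the Smith normal forms of the $d_n$ over $\ZZ$ and read off the elementary divisors, giving the final assertion. That only the primes $2,3,5$ appear is the expected range, since by Proposition~\ref{prop:stabprimes} these are exactly the primes dividing the orders of finite-order elements of $\GL_4(\OO_{-3})$ and $\GL_4(\OO_{-4})$; but as the remark following Proposition~\ref{prop:Voronoi-equivariant} warns, $H_*(V_4)$ could a priori carry additional small torsion beyond what the group cohomology detects, so the absence of any prime outside $\{2,3,5\}$ is genuinely part of the computational content to be verified.

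The main obstacle is the scale and reliability of the $N=4$ computation: the number of cells in dimensions up to $15$ is large, the classification of cells up to $\GL_4(\OO)$-equivalence repeatedly requires solving equivalence problems for positive-definite Hermitian forms, and the integer matrices whose Smith normal forms must be computed are sizeable, so controlling arithmetic overflow and ensuring the facet enumeration is exhaustive is the delicate part. The identity $d^2 = 0$ and the mass formula---illustrated for $\GL_4(\OO_{-4})$ in \S\ref{sec:mass}---are the principal safeguards against computational error.
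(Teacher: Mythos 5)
Your proposal is correct and follows essentially the same route as the paper: the theorem is established there not by a theoretical argument but by explicit machine computation of the Voronoi complexes (perfect-form enumeration via the algorithm of \cite{aim-voronoi}, face-lattice and stabilizer/orientability bookkeeping, differentials as in \cite[\S3.1]{PerfFormModGrp}, then ranks and Smith normal forms), with $d^2=0$ and the mass formula of Theorem~\ref{theorem:mass} as correctness checks, and the results recorded in Tables~\ref{tab:diff4-3} and \ref{tab:diff4-4}, from which the stated ranks follow via $\rank H_n = |\Sigma_n| - \rank(d_n) - \rank(d_{n+1})$ and the torsion from the elementary divisors. Your observation that the restriction of torsion to $\{2,3,5\}$ is genuinely computational content, not a formal consequence of Proposition~\ref{prop:stabprimes}, is also consistent with the paper's remark following Proposition~\ref{prop:Voronoi-equivariant}.
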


Invoking Theorem~\ref{theorem:groupcoh},  we also get information on
the cohomology:

\begin{theorem} 
Let $D = -3$ or $-4$.  Then 
 $H^n(\GL_4(\OO_{D}))$ has non-trivial rank only for
$n = 0,\,3,\,5,\,6,\,8,$ and  $9$.
 More precisely,
 \begin{enumerate}
\item For $n = 3k$ we have 
$\rank H^n(\GL_4(\OO_D))= 1$ if $0 \leq k \leq 3$.
\item For $n=5$ we have $\rank H^n(\GL_4(\OO_D))=1$.
\item For $n=8$ we have $\rank H^n(\GL_4(\OO_D))=2$.
\end{enumerate}
\end{theorem}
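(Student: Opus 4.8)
The plan is to deduce this cohomology theorem directly from the preceding homology theorem about $V_4$ via the duality established in Theorem~\ref{theorem:groupcoh}. First I would apply Theorem~\ref{theorem:groupcoh} with $N = 4$, which gives the isomorphism $H_n(\Vor_{4,D}) \simeq H^{N^2 - 1 - n}(\GL_4(\OO_D)) = H^{15 - n}(\GL_4(\OO_D))$, valid modulo the Serre class $\cS_b$. By Proposition~\ref{prop:stabprimes}, the torsion primes for $\GL_4(\OO_D)$ with $D = -3, -4$ lie in $\set{2,3,5}$ (since $D \neq -7$), so $b = 5$ suffices and the rank statements---which are insensitive to torsion---transfer exactly.

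The core of the argument is a change of indexing. The previous theorem states that $H_n(V_4)$ has nontrivial rank precisely for $n \in \set{6,7,9,10,12,15}$, with the $3k$-ranks equal to $1$ for $k > 1$ (i.e.\ at $n = 6,9,12,15$), rank $2$ at $n = 7$, and rank $1$ at $n = 10$. Setting $m = 15 - n$, each degree $n$ maps to a cohomological degree $m$: the set $\set{6,7,9,10,12,15}$ becomes $\set{9,8,6,5,3,0}$, which is exactly the claimed set $\set{0,3,5,6,8,9}$ of nonvanishing cohomological degrees. I would then verify the three itemized rank assertions by tracking individual degrees through this correspondence. The $3k$ statement in cohomology corresponds to $n = 15 - 3k = 3(5-k)$, so writing $j = 5 - k$ the cohomological degrees $n = 3k$ for $0 \le k \le 3$ come from $V_4$-degrees $15, 12, 9, 6$, each of rank $1$ by the homology theorem; this gives $\rank H^{3k} = 1$ for $0 \le k \le 3$. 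Cohomological degree $5$ comes from $H_{10}(V_4)$, which has rank $1$; and cohomological degree $8$ comes from $H_7(V_4)$, which has rank $2$. This matches all three items.

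I do not expect any serious obstacle, since this theorem is essentially a bookkeeping reformulation of the homology result through the dimension shift $n \mapsto 15 - n$. The one point requiring a moment of care is confirming that Theorem~\ref{theorem:groupcoh} applies with the correct bound: one must check that $5$ is indeed an upper bound on the torsion primes, which follows from Proposition~\ref{prop:stabprimes} for $D = -3, -4$. The final sentence of the previous theorem, that the only torsion primes appearing are $2, 3, 5$, is consistent with this bound and ensures that working modulo $\cS_b$ captures all the rank information without loss.
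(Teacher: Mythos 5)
Your proposal is correct and coincides with the paper's own (largely implicit) argument: the paper simply says ``Invoking Theorem~\ref{theorem:groupcoh}'' and leaves the reindexing $n \mapsto 15-n$ and the torsion-prime bound to the reader, which is exactly the bookkeeping you carry out. The one point you make explicit that the paper glosses over---that ranks are insensitive to isomorphism modulo the Serre class $\cS_b$ of finite groups, with $b=5$ justified by Proposition~\ref{prop:stabprimes}---is a correct and welcome addition, not a deviation.
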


The remainder of the text is devoted to presenting details about the
Voronoi complexes and full information about their homology.  The
notation is as follows:

\begin{itemize}
\item The first three columns concern the cell decomposition of $X_{N}^{*}$ mod $\Gamma$:
\begin{itemize}
\item $n$ is dimension of the cells in the (partially) compactified
symmetric space $X_{N}^{*}$.
\item $|\Sigma^{*}_{n}|$ is the number of $\Gamma$-orbits in the
cells that meet $X_{N}\subset X_{N}^{*}$.
\item $|\Stab |$ gives the sizes of the stabilizer
subgroups in factored form.  The notation $A (k)$ means that, of the
$|\Sigma^{*}_{n}|$ cells of dimension $n$, $k$ of them have a
stabilizer subgroup of order $A$.
\end{itemize}
\item The next four columns concern the differentials $d_{n}$ of the
Voronoi complex $\Vor_{N,D}$:
\begin{itemize}
\item $|\Sigma_{n}|$ is the number of orientable $\Gamma$-orbits in $\Sigma^{*}_{n}$.
\item $\Omega$ is the number of nonzero entries in the differential $d_{n}\colon V_{n} (\Gamma )\rightarrow V_{n-1} (\Gamma)$.
\item $\rank$ is the rank of $d_{n}$.
\item elem.~div.~gives the elementary divisors of $d_{n}$.  As in the
stabilizer column, the notation $d (k)$ means that the elementary
divisor $d$ occurs with multiplicity $k$.  If the rank of $d_{n}$
is zero then this column is left empty.
\end{itemize}
\item Finally, the last column gives the homology of the Voronoi
complex.  One can easily check that $H_{n}\simeq \ZZ^{r}\oplus
\bigoplus (\ZZ/{d}\ZZ )^{k}$, where $r=|\Sigma_{n}|-\rank
(d_{n})-\rank (d_{n+1})$ and the sum is taken over the elementary
divisors $d (k)$ from row $n+1$.  To save space, we abbreviate $\ZZ
/d\ZZ$ by $\ZZ_{d}$.  By Theorem \ref{theorem:groupcoh} we have
$H_n(\Vor_{N,D}) \simeq H^{N^2 - 1 - n}(\GL_N(\OO))$ modulo the
torsion primes in $\GL_N(\OO)$.  These primes are visible in the third
column of each table.
\end{itemize}

%The tables suggest that the rank of $H_n(\GL_3(\OO))$ is nonzero if
%and only if $n=0$, 4 or 5, and similarly that the rank of
%$H_n(\GL_4(\OO))$ is nonzero if and only if $n=0$, 3, 5, 6, 8 or 9.

\newcolumntype{x}[1]{>{\RaggedRight}p{#1}}
\begin{table}[htb]
\caption{Invariants for the cell complex, differentials, and homology for $\GL_3(\OO_{-3})$.} \label{tab:diff3-3}
\begin{tabular}{|c||c|x{2in}||c|c|c|x{0.75in}||r|}  
\hline
$n$&$|\Sigma^{*}_{n}|$&$|\Stab|$&$|\Sigma_{n}|$& $\Omega$ & $\rank$ & elem.~div. & $H_{n}$ \\ 
\hline
2&1&$2^{4}3^{4}(1)$&0&0&0&
&0\\
\hline
3&2&$2^{4}3^{2}(1)$, $2^{3}3^{3}(1)$&0&0&0&
&0\\
\hline
4&3&$2^{2}3^{2}(1)$, $2^{4}3^{3}(1)$, $2^{4}3^{1}(1)$&1&0&0&
&$ \ZZ $\\
\hline
5&4&$2^{4}3^{2}(1)$, $2^{2}3^{2}(1)$, $2^{1}3^{2}(1)$, $2^{2}3^{3}(1)$&2&0&0&
&$ \ZZ $\\
\hline
6&3&$2^{2}3^{2}(1)$, $2^{1}3^{2}(1)$, $2^{2}3^{3}(1)$&1&2&1&1(1)
&0\\
\hline
7&2&$2^{4}3^{2}(1)$, $2^{2}3^{2}(1)$&1&0&0&
&$ \ZZ_{9} $\\
\hline
8&2&$2^{4}3^{4}(2)$&2&2&1&9(1)
&$ \ZZ $\\
\hline
\end{tabular}
\end{table}

\begin{table}[htb]
\caption{Invariants for the cell complex, differentials, and homology for $\GL_3(\OO_{-4})$.} \label{tab:diff3-4}
\begin{tabular}{|c||c|x{2in}||c|c|c|x{0.75in}||r|}  
\hline
$n$&$|\Sigma^{*}_{n}|$&$|\Stab|$&$|\Sigma_{n}|$& $\Omega$ & $\rank$ & elem.~div. & $H_{n}$ \\ 
\hline
2&2&$2^{7}3^{1}(2)$&0&0&0&
&0\\
\hline
3&3&$2^{3}3^{1}(1)$, $2^{5}3^{1}(2)$&0&0&0&
&0\\
\hline
4&4&$2^{4}(1)$, $2^{3}(1)$, $2^{5}(1)$, $2^{7}3^{1}(1)$&1&0&0&
&$ \ZZ $\\
\hline
5&5&$2^{2}3^{1}(2)$, $2^{3}(1)$, $2^{5}3^{1}(1)$, $2^{5}(1)$&4&0&0&
&$ \ZZ \oplus \ZZ_{2}$\\
\hline
6&3&$2^{2}3^{1}(2)$, $2^{4}(1)$&3&10&3&1(2), 2(1)
&0\\
\hline
7&1&$2^{4}(1)$&0&0&0&
&0\\
\hline
8&1&$2^{7}3^{1}(1)$&1&0&0&
&$ \ZZ $\\
\hline
\end{tabular}
\end{table}

\begin{table}[htb]
\caption{Invariants for the cell complex, differentials, and homology for $\GL_3(\OO_{-7})$.} \label{tab:diff3-7}
\begin{tabular}{|c||c|x{2in}||c|c|c|x{0.75in}||r|}  
\hline
$n$&$|\Sigma^{*}_{n}|$&$|\Stab|$&$|\Sigma_{n}|$& $\Omega$ & $\rank$ & elem.~div. & $H_{n}$ \\ 
\hline
2&3&$2^{4}3^{1}(3)$&0&0&0&
&0\\
\hline
3&6&$2^{2}3^{1}(2)$, $2^{3}3^{1}(1)$, $2^{2}(1)$, $2^{4}3^{1}(1)$, $2^{4}(1)$&0&0&0&
&0\\
\hline
4&9&$2^{2}3^{1}(1)$, $2^{1}(1)$, $2^{3}3^{1}(1)$, $2^{2}(2)$, $2^{4}(1)$, $2^{3}(3)$&3&0&0&
&$ \ZZ $\\
\hline
5&11&$2^{2}3^{1}(1)$, $2^{1}(2)$, $2^{2}(1)$, $2^{4}3^{1}(3)$, $2^{1}3^{1}(3)$, $2^{3}(1)$&10&8&2&1(2)
&$ \ZZ^{2} \oplus \ZZ_{7}$\\
\hline
6&8&$2^{2}(2)$, $2^{1}3^{1}(2)$, $2^{3}(3)$, $2^{1}3^{1}7^{1}(1)$&6&19&6&1(5), 7(1)
&0\\
\hline
7&2&$2^{1}7^{1}(1)$, $2^{2}(1)$&1&0&0&
&$ \ZZ_{3} $\\
\hline
8&2&$2^{4}3^{1}7^{1}(1)$, $2^{1}3^{1}7^{1}(1)$&2&2&1&3(1)
&$ \ZZ $\\
\hline
\end{tabular}
\end{table}

\begin{table}[htb]
\caption{Invariants for the cell complex, differentials, and homology for $\GL_3(\OO_{-8})$.} \label{tab:diff3-8}
\begin{tabular}{|c||c|x{2in}||c|c|c|x{0.75in}||r|}  
\hline
$n$&$|\Sigma^{*}_{n}|$&$|\Stab|$&$|\Sigma_{n}|$& $\Omega$ & $\rank$ & elem.~div. & $H_{n}$ \\ 
\hline
2&5&$2^{2}3^{1}(2)$, $2^{4}3^{1}(2)$, $2^{4}(1)$&0&0&0&
&0\\
\hline
3&16&$2^{1}(2)$, $2^{2}(5)$, $2^{3}3^{1}(1)$, $2^{2}3^{1}(5)$, $2^{4}3^{1}(1)$, $2^{5}(2)$&2&0&0&
&0\\
\hline
4&26&$2^{1}(14)$, $2^{2}(9)$, $2^{3}(1)$, $2^{5}3^{1}(1)$, $2^{4}(1)$&16&12&2&1(2)
&$ \ZZ \oplus \ZZ_{2}$\\
\hline
5&37&$2^{1}(25)$, $2^{2}(4)$, $2^{1}3^{1}(4)$, $2^{3}(2)$, $2^{4}3^{1}(2)$&36&104&13&1(12), 2(1)
&$ \ZZ^{2} \oplus \ZZ_{2}$\\
\hline
6&28&$2^{1}(18)$, $2^{2}(2)$, $2^{1}3^{1}(4)$, $2^{3}(2)$, $2^{2}3^{1}(2)$&26&166&21&1(20), 2(1)
&0\\
\hline
7&7&$2^{1}(6)$, $2^{2}(1)$&6&45&5&1(5)
&0\\
\hline
8&2&$2^{1}3^{1}(1)$, $2^{5}(1)$&2&6&1&1(1)
&$ \ZZ $\\
\hline
\end{tabular}
\end{table}

\begin{table}[htb]
\caption{Invariants for the cell complex, differentials, and homology for $\GL_3(\OO_{-11})$.} \label{tab:diff3-11}
\begin{tabular}{|c||c|x{2in}||c|c|c|x{0.75in}||r|}  
\hline
$n$&$|\Sigma^{*}_{n}|$&$|\Stab|$&$|\Sigma_{n}|$& $\Omega$ & $\rank$ & elem.~div. & $H_{n}$ \\ 
\hline
2&8&$2^{2}3^{1}(2)$, $2^{3}3^{1}(1)$, $2^{2}(2)$, $2^{4}3^{1}(1)$, $2^{4}(2)$&1&0&0&
&0\\
\hline
3&34&$2^{1}(13)$, $2^{2}(11)$, $2^{3}(2)$, $2^{4}3^{1}(1)$, $2^{2}3^{1}(4)$, $2^{3}3^{1}(3)$&15&6&1&1(1)
&0\\
\hline
4&91&$2^{1}(67)$, $2^{2}(15)$, $2^{1}3^{1}(1)$, $2^{3}(5)$, $2^{2}3^{1}(1)$, $2^{4}(1)$, $2^{4}3^{1}(1)$&75&193&14&1(14)
&$ \ZZ $\\
\hline
5&150&$2^{1}(124)$, $2^{2}(13)$, $2^{1}3^{1}(7)$, $2^{3}(2)$, $2^{2}3^{1}(3)$, $2^{4}3^{1}(1)$&147&700&60&1(60)
&$ \ZZ^{2} \oplus \ZZ_{4}$\\
\hline
6&125&$2^{1}(110)$, $2^{2}(6)$, $2^{1}3^{1}(6)$, $2^{3}(3)$&122&859&85&1(84), 4(1)
&0\\
\hline
7&51&$2^{1}(44)$, $2^{2}(2)$, $2^{1}3^{1}(3)$, $2^{2}3^{1}(1)$, $2^{4}(1)$&48&404&37&1(37)
&$ \ZZ_{3}^{2} $\\
\hline
8&12&$2^{1}(4)$, $2^{1}3^{1}(6)$, $2^{4}(2)$&12&88&11&1(9), 3(2)
&$ \ZZ $\\
\hline
\end{tabular}
\end{table}

\begin{table}[htb]
\caption{Invariants for the cell complex, differentials, and homology for $\GL_3(\OO_{-15})$.} \label{tab:diff3-15}
\begin{tabular}{|c||c|x{2in}||c|c|c|x{0.75in}||r|}  
\hline
$n$&$|\Sigma^{*}_{n}|$&$|\Stab|$&$|\Sigma_{n}|$& $\Omega$ & $\rank$ & elem.~div. & $H_{n}$ \\ 
\hline
2&34&$2^{1}(4)$, $2^{2}(8)$, $2^{1}3^{1}(1)$, $2^{3}(8)$, $2^{4}3^{1}(3)$, $2^{2}3^{1}(3)$, $2^{4}(7)$&10&0&0&
&$ \ZZ $\\
\hline
3&217&$2^{1}(102)$, $2^{2}(77)$, $2^{1}3^{1}(2)$, $2^{3}(23)$, $2^{2}3^{1}(5)$, $2^{4}(2)$, $2^{3}3^{1}(3)$, $2^{4}3^{1}(3)$&128&175&9&1(9)
&0\\
\hline
4&689&$2^{1}(546)$, $2^{2}(114)$, $2^{1}3^{1}(1)$, $2^{3}(20)$, $2^{2}3^{1}(2)$, $2^{4}(5)$, $2^{3}3^{1}(1)$&604&2112&119&1(119)
&$ \ZZ^{3} \oplus \ZZ_{2}^{4}$\\
\hline
5&1224&$2^{1}(1109)$, $2^{2}(84)$, $2^{1}3^{1}(7)$, $2^{3}(13)$, $2^{2}3^{1}(5)$, $2^{4}(3)$, $2^{4}3^{1}(3)$&1185&6373&482&1(478), 2(4)
&$ \ZZ^{5} $\\
\hline
6&1139&$2^{1}(1081)$, $2^{2}(47)$, $2^{1}3^{1}(7)$, $2^{3}(4)$&1102&7771&698&1(698)
&0\\
\hline
7&522&$2^{1}(489)$, $2^{2}(30)$, $2^{1}3^{1}(1)$, $2^{3}(1)$, $2^{2}3^{1}(1)$&493&4162&404&1(404)
&$ \ZZ_{3}^{2}\oplus\ZZ_{6}^{2}\oplus\ZZ_{12} $\\
\hline
8&90&$2^{1}(78)$, $2^{2}(3)$, $2^{1}3^{1}(5)$, $2^{3}(2)$, $2^{2}3^{1}(2)$&90&972&89&1(84), 3(2), 6(2), 12(1)
&$ \ZZ $\\
\hline
\end{tabular}
\end{table}

\begin{table}[htb]
\caption{Invariants for the cell complex, differentials, and homology for $\GL_3(\OO_{-19})$.} \label{tab:diff3-19}
\begin{tabular}{|c||c|x{2in}||c|c|c|x{0.75in}||r|}  
\hline
$n$&$|\Sigma^{*}_{n}|$&$|\Stab|$&$|\Sigma_{n}|$& $\Omega$ & $\rank$ & elem.~div. & $H_{n}$ \\ 
\hline
2&43&$2^{1}(23)$, $2^{2}(10)$, $2^{4}3^{1}(1)$, $2^{1}3^{1}(3)$, $2^{3}(1)$, $2^{2}3^{1}(2)$, $2^{4}(2)$, $2^{3}3^{1}(1)$&29&0&0&
&0\\
\hline
3&359&$2^{1}(304)$, $2^{2}(38)$, $2^{1}3^{1}(3)$, $2^{3}(7)$, $2^{2}3^{1}(4)$, $2^{3}3^{1}(2)$, $2^{4}3^{1}(1)$&314&664&29&1(29)
&0\\
\hline
4&1293&$2^{1}(1234)$, $2^{2}(52)$, $2^{3}(4)$, $2^{4}(1)$, $2^{3}3^{1}(1)$, $2^{4}3^{1}(1)$&1255&5410&285&1(285)
&$ \ZZ^{2} $\\
\hline
5&2347&$2^{1}(2287)$, $2^{2}(37)$, $2^{1}3^{1}(13)$, $2^{3}(7)$, $2^{2}3^{1}(2)$, $2^{4}3^{1}(1)$&2339&13596&968&1(968)
&$ \ZZ^{3} \oplus \ZZ_{2}\oplus\ZZ_{8}$\\
\hline
6&2169&$2^{1}(2137)$, $2^{2}(15)$, $2^{1}3^{1}(10)$, $2^{3}(7)$&2164&15404&1368&1(1366), 2(1), 8(1)
&0\\
\hline
7&958&$2^{1}(950)$, $2^{2}(6)$, $2^{1}3^{1}(1)$, $2^{4}(1)$&952&8181&796&1(796)
&$ \ZZ_{3}^{4} $\\
\hline
8&157&$2^{1}(149)$, $2^{1}3^{1}(6)$, $2^{4}(2)$&157&1884&156&1(152), 3(4)
&$ \ZZ $\\
\hline
\end{tabular}
\end{table}

\begin{table}[htb]
\caption{Invariants for the cell complex, differentials, and homology for $\GL_3(\OO_{-20})$.} \label{tab:diff3-20}
\begin{tabular}{|c||c|x{2in}||c|c|c|x{0.75in}||r|}  
\hline
$n$&$|\Sigma^{*}_{n}|$&$|\Stab|$&$|\Sigma_{n}|$& $\Omega$ & $\rank$ & elem.~div. & $H_{n}$ \\ 
\hline
2&69&$2^{1}(21)$, $2^{2}(26)$, $2^{1}3^{1}(4)$, $2^{3}(6)$, $2^{2}3^{1}(3)$, $2^{4}(7)$, $2^{4}3^{1}(2)$&31&0&0&
&$ \ZZ $\\
\hline
3&538&$2^{1}(398)$, $2^{2}(98)$, $2^{1}3^{1}(4)$, $2^{3}(22)$, $2^{2}3^{1}(7)$, $2^{4}(3)$, $2^{3}3^{1}(4)$, $2^{4}3^{1}(2)$&425&772&30&1(30)
&$ \ZZ_{2} $\\
\hline
4&1895&$2^{1}(1721)$, $2^{2}(153)$, $2^{3}(15)$, $2^{2}3^{1}(1)$, $2^{4}(4)$, $2^{4}3^{1}(1)$&1804&7464&395&1(394), 2(1)
&$ \ZZ^{4} \oplus \ZZ_{2}^{4}$\\
\hline
5&3382&$2^{1}(3223)$, $2^{2}(117)$, $2^{1}3^{1}(15)$, $2^{3}(15)$, $2^{2}3^{1}(7)$, $2^{4}(4)$, $2^{4}3^{1}(1)$&3345&19167&1405&1(1401), 2(4)
&$ \ZZ^{6} \oplus \ZZ_{3}$\\
\hline
6&3061&$2^{1}(2976)$, $2^{2}(61)$, $2^{1}3^{1}(15)$, $2^{3}(8)$, $2^{2}3^{1}(1)$&3017&21502&1934&1(1933), 3(1)
&0\\
\hline
7&1330&$2^{1}(1293)$, $2^{2}(37)$&1294&11127&1083&1(1083)
&$ \ZZ_{3}^{2}\oplus\ZZ_{6}^{2}\oplus\ZZ_{12} $\\
\hline
8&212&$2^{1}(202)$, $2^{2}(2)$, $2^{1}3^{1}(5)$, $2^{3}(2)$, $2^{2}3^{1}(1)$&212&2532&211&1(206), 3(2), 6(2), 12(1)
&$ \ZZ $\\
\hline
\end{tabular}
\end{table}

\begin{table}[htb]
\caption{Invariants for the cell complex, differentials, and homology for $\GL_3(\OO_{-23})$.} \label{tab:diff3-23}
\begin{tabular}{|c||c|x{2in}||c|c|c|x{0.75in}||r|}  
\hline
$n$&$|\Sigma^{*}_{n}|$&$|\Stab|$&$|\Sigma_{n}|$& $\Omega$ & $\rank$ & elem.~div. & $H_{n}$ \\ 
\hline
2&204&$2^{1}(89)$, $2^{2}(65)$, $2^{1}3^{1}(4)$, $2^{3}(27)$, $2^{2}3^{1}(6)$, $2^{4}(10)$, $2^{4}3^{1}(3)$&126&0&0&
&$ \ZZ^{4} $\\
\hline
3&1777&$2^{1}(1402)$, $2^{2}(295)$, $2^{1}3^{1}(2)$, $2^{3}(56)$, $2^{2}3^{1}(11)$, $2^{4}(3)$, $2^{3}3^{1}(5)$, $2^{4}3^{1}(3)$&1477&3272&122&1(122)
&$ \ZZ_{2} $\\
\hline
4&6589&$2^{1}(6112)$, $2^{2}(434)$, $2^{3}(35)$, $2^{2}3^{1}(2)$, $2^{4}(5)$, $2^{3}3^{1}(1)$&6285&26837&1355&1(1354), 2(1)
&$ \ZZ^{5} \oplus \ZZ_{2}^{10}\oplus\ZZ_{6}^{3}$\\
\hline
5&12214&$2^{1}(11866)$, $2^{2}(291)$, $2^{1}3^{1}(19)$, $2^{3}(23)$, $2^{2}3^{1}(6)$, $2^{4}(6)$, $2^{4}3^{1}(3)$&12119&69891&4925&1(4912), 2(10), 6(3)
&$ \ZZ^{10} \oplus \ZZ_{2}^{5}$\\
\hline
6&11627&$2^{1}(11461)$, $2^{2}(138)$, $2^{1}3^{1}(16)$, $2^{3}(10)$, $2^{2}3^{1}(2)$&11568&81720&7184&1(7179), 2(5)
&0\\
\hline
7&5303&$2^{1}(5250)$, $2^{2}(48)$, $2^{1}3^{1}(2)$, $2^{2}3^{1}(2)$, $2^{4}(1)$&5253&44741&4384&1(4384)
&$ \ZZ_{12}^{2}\oplus\ZZ_{3}^{2}\oplus\ZZ_{6}^{3} $\\
\hline
8&870&$2^{1}(853)$, $2^{2}(3)$, $2^{1}3^{1}(10)$, $2^{3}(2)$, $2^{4}(2)$&870&10464&869&1(862), 12(2), 3(2), 6(3)
&$ \ZZ $\\
\hline
\end{tabular}
\end{table}

\begin{table}[htb]
\caption{Invariants for the cell complex, differentials, and homology for $\GL_3(\OO_{-24})$.} \label{tab:diff3-24}
\begin{tabular}{|c||c|x{2in}||c|c|c|x{0.75in}||r|}  
\hline
$n$&$|\Sigma^{*}_{n}|$&$|\Stab|$&$|\Sigma_{n}|$& $\Omega$ & $\rank$ & elem.~div. & $H_{n}$ \\ 
\hline
2&158&$2^{1}(90)$, $2^{2}(41)$, $2^{1}3^{1}(3)$, $2^{3}(13)$, $2^{2}3^{1}(1)$, $2^{4}(8)$, $2^{4}3^{1}(2)$&104&0&0&
&$ \ZZ $\\
\hline
3&1396&$2^{1}(1214)$, $2^{2}(142)$, $2^{1}3^{1}(4)$, $2^{3}(24)$, $2^{2}3^{1}(6)$, $2^{4}(1)$, $2^{3}3^{1}(3)$, $2^{4}3^{1}(2)$&1247&2967&103&1(103)
&$ \ZZ_{2}^{7} $\\
\hline
4&5090&$2^{1}(4859)$, $2^{2}(199)$, $2^{1}3^{1}(1)$, $2^{3}(23)$, $2^{2}3^{1}(1)$, $2^{4}(5)$, $2^{3}3^{1}(1)$, $2^{4}3^{1}(1)$&4957&22280&1144&1(1137), 2(7)
&$ \ZZ^{5} \oplus \ZZ_{2}^{5}$\\
\hline
5&9091&$2^{1}(8889)$, $2^{2}(161)$, $2^{1}3^{1}(12)$, $2^{3}(20)$, $2^{2}3^{1}(5)$, $2^{4}(3)$, $2^{4}3^{1}(1)$&9043&53385&3808&1(3803), 2(5)
&$ \ZZ^{7} \oplus \ZZ_{2}^{8}\oplus\ZZ_{4}^{2}$\\
\hline
6&8319&$2^{1}(8187)$, $2^{2}(102)$, $2^{1}3^{1}(13)$, $2^{3}(15)$, $2^{2}3^{1}(2)$&8263&58948&5228&1(5218), 2(8), 4(2)
&$ \ZZ_{2} $\\
\hline
7&3662&$2^{1}(3617)$, $2^{2}(42)$, $2^{3}(2)$, $2^{4}(1)$&3630&31020&3035&1(3034), 2(1)
&$ \ZZ_{12}^{2}\oplus\ZZ_{2}^{3}\oplus\ZZ_{6} $\\
\hline
8&596&$2^{1}(578)$, $2^{2}(8)$, $2^{1}3^{1}(2)$, $2^{3}(4)$, $2^{2}3^{1}(2)$, $2^{4}(2)$&596&7188&595&1(589), 12(2), 2(3), 6(1)
&$ \ZZ $\\
\hline
\end{tabular}
\end{table}

\begin{table}[htb]
\caption{Invariants for the cell complex, differentials, and homology for $\GL_4(\OO_{-3})$.} \label{tab:diff4-3}
\begin{tabular}{|c||c|x{2in}||c|c|c|x{0.75in}||r|}  
\hline
$n$&$|\Sigma^{*}_{n}|$&$|\Stab|$&$|\Sigma_{n}|$& $\Omega$ & $\rank$ & elem.~div. & $H_{n}$ \\ 
\hline
3&2&$2^{4}3^{5}(1)$, $2^{7}3^{5}(1)$&0&0&0&
&0\\
\hline
4&5&$2^{4}3^{2}(1)$, $2^{4}3^{2}5^{1}(1)$, $2^{5}3^{3}(1)$, $2^{5}3^{4}(1)$, $2^{3}3^{4}(1)$&0&0&0&
&0\\
\hline
5&12&$2^{2}3^{2}(2)$, $2^{3}3^{2}(1)$, $2^{2}3^{3}(1)$, $2^{3}3^{1}(1)$, $2^{5}3^{4}(1)$, $2^{3}3^{3}(1)$, $2^{4}3^{1}(2)$, $2^{5}3^{2}(2)$, $2^{6}3^{4}(1)$&0&0&0&
&0\\
\hline
6&34&$2^{5}3^{4}(1)$, $2^{1}3^{1}(1)$, $2^{4}3^{1}(2)$, $2^{2}3^{1}(8)$, $2^{5}3^{3}(1)$, $2^{1}3^{2}(2)$, $2^{3}3^{4}(1)$, $2^{3}3^{1}(4)$, $2^{5}3^{2}(1)$, $2^{3}3^{3}(1)$, $2^{4}3^{2}(2)$, $2^{2}3^{3}(4)$, $2^{2}3^{2}(6)$&8&0&0&
&$ \ZZ $\\
\hline
7&82&$2^{1}3^{1}(21)$, $2^{5}3^{1}(1)$, $2^{2}3^{1}(23)$, $2^{1}3^{2}(7)$, $2^{2}3^{3}(3)$, $2^{3}3^{1}(5)$, $2^{3}3^{4}(1)$, $2^{2}3^{2}(8)$, $2^{3}3^{3}(2)$, $2^{7}3^{4}(1)$, $2^{4}3^{1}(4)$, $2^{1}3^{3}(2)$, $2^{4}3^{2}(1)$, $2^{3}3^{2}(3)$&50&58&7&1(7)
&$ \ZZ^{2} \oplus \ZZ_{2}$\\
\hline
8&166&$2^{3}3^{3}(2)$, $2^{1}3^{1}(88)$, $2^{4}3^{3}(1)$, $2^{2}3^{1}(36)$, $2^{1}3^{2}(13)$, $2^{5}3^{3}(1)$, $2^{3}3^{1}(5)$, $2^{2}3^{2}(7)$, $2^{4}3^{1}(2)$, $2^{1}3^{3}(2)$, $2^{3}3^{2}(5)$, $2^{2}3^{3}(3)$, $2^{2}3^{5}(1)$&129&604&41&1(40), 2(1)
&$ \ZZ_{9} $\\
\hline
9&277&$2^{1}3^{1}(191)$, $2^{5}3^{2}(1)$, $2^{2}3^{1}(34)$, $2^{1}3^{2}(17)$, $2^{5}3^{5}(2)$, $2^{3}3^{1}(6)$, $2^{1}3^{1}5^{1}(1)$, $2^{2}3^{2}(9)$, $2^{1}3^{3}(2)$, $2^{3}3^{2}(3)$, $2^{5}3^{1}(1)$, $2^{2}3^{3}(7)$, $2^{7}3^{3}(1)$, $2^{2}3^{5}(1)$, $2^{4}3^{2}5^{1}(1)$&228&1616&88&1(87), 9(1)
&$ \ZZ \oplus \ZZ_{24}$\\
\hline
10&324&$2^{1}3^{1}(246)$, $2^{2}3^{1}(35)$, $2^{1}3^{2}(16)$, $2^{3}3^{1}(7)$, $2^{2}3^{2}(9)$, $2^{1}3^{3}(2)$, $2^{4}3^{3}(1)$, $2^{2}3^{3}(5)$, $2^{3}3^{3}(2)$, $2^{5}3^{2}(1)$&286&2531&139&1(138), 24(1)
&$ \ZZ \oplus \ZZ_{2}^{4}$\\
\hline
11&259&$2^{1}3^{1}(200)$, $2^{2}3^{1}(24)$, $2^{1}3^{2}(11)$, $2^{3}3^{1}(6)$, $2^{2}3^{2}(9)$, $2^{2}3^{4}(1)$, $2^{4}3^{1}(1)$, $2^{1}3^{3}(1)$, $2^{2}3^{3}(2)$, $2^{4}3^{2}(2)$, $2^{4}3^{3}(2)$&237&2283&146&1(142), 2(4)
&$ \ZZ_{2}^{2}\oplus\ZZ_{6} $\\
\hline
12&142&$2^{1}3^{1}(91)$, $2^{2}3^{1}(20)$, $2^{1}3^{2}(9)$, $2^{3}3^{1}(5)$, $2^{2}3^{2}(11)$, $2^{1}3^{3}(4)$, $2^{2}3^{3}(1)$, $2^{4}3^{2}(1)$&122&1252&91&1(88), 2(2), 6(1)
&$ \ZZ \oplus \ZZ_{3}\oplus\ZZ_{12}$\\
\hline
13&48&$2^{1}3^{3}(1)$, $2^{3}3^{3}(1)$, $2^{1}3^{1}(22)$, $2^{2}3^{1}(6)$, $2^{1}3^{2}(7)$, $2^{3}3^{1}(2)$, $2^{2}3^{2}(8)$, $2^{4}3^{1}(1)$&36&369&30&1(28), 3(1), 12(1)
&$ \ZZ_{15} $\\
\hline
14&15&$2^{1}3^{2}(2)$, $2^{2}3^{2}(2)$, $2^{1}3^{3}(1)$, $2^{2}3^{3}(3)$, $2^{1}3^{1}(1)$, $2^{4}3^{2}5^{1}(1)$, $2^{3}3^{1}(1)$, $2^{2}3^{1}(2)$, $2^{1}3^{1}5^{1}(1)$, $2^{6}3^{2}(1)$&10&51&6&1(5), 15(1)
&$ \ZZ_{12}\oplus\ZZ_{288} $\\
\hline
15&5&$2^{6}3^{3}(1)$, $2^{7}3^{5}5^{1}(1)$, $2^{4}3^{5}(1)$, $2^{2}3^{3}(2)$&5&16&4&1(2), 12(1), 288(1)
&$ \ZZ $\\
\hline
\end{tabular}
\end{table}

\begin{table}[htb]
\caption{Invariants for the cell complex, differentials, and homology for $\GL_4(\OO_{-4})$.} \label{tab:diff4-4}
\begin{tabular}{|c||c|x{2in}||c|c|c|x{0.75in}||r|}  
\hline
$n$&$|\Sigma^{*}_{n}|$&$|\Stab|$&$|\Sigma_{n}|$& $\Omega$ & $\rank$ & elem.~div. & $H_{n}$ \\ 
\hline
3&4&$2^{11}3^{1}(2)$, $2^{9}3^{1}(1)$, $2^{7}3^{1}(1)$&0&0&0&
&0\\
\hline
4&10&$2^{3}3^{1}(1)$, $2^{8}(1)$, $2^{4}3^{1}(1)$, $2^{5}3^{1}5^{1}(1)$, $2^{7}(1)$, $2^{5}3^{1}(2)$, $2^{8}3^{1}(1)$, $2^{7}3^{1}(1)$, $2^{5}(1)$&0&0&0&
&0\\
\hline
5&33&$2^{10}3^{1}(1)$, $2^{2}(1)$, $2^{7}3^{2}(1)$, $2^{6}3^{1}(1)$, $2^{3}(6)$, $2^{4}3^{1}(2)$, $2^{4}(7)$, $2^{7}(1)$, $2^{5}3^{1}(2)$, $2^{3}3^{1}(2)$, $2^{10}(1)$, $2^{6}(2)$, $2^{5}(6)$&5&0&0&
&0\\
\hline
6&98&$2^{7}(1)$, $2^{7}3^{1}(2)$, $2^{2}(26)$, $2^{3}(37)$, $2^{2}3^{1}(1)$, $2^{4}(10)$, $2^{8}3^{2}(1)$, $2^{3}3^{1}(1)$, $2^{5}(9)$, $2^{4}3^{1}(4)$, $2^{6}(1)$, $2^{6}3^{1}(1)$, $2^{5}3^{1}(4)$&48&35&5&1(5)
&$ \ZZ \oplus \ZZ_{2}^{4}$\\
\hline
7&258&$2^{2}(147)$, $2^{3}(69)$, $2^{2}3^{1}(1)$, $2^{4}(18)$, $2^{3}3^{1}(2)$, $2^{5}(10)$, $2^{4}3^{1}(3)$, $2^{6}(2)$, $2^{5}3^{1}(2)$, $2^{7}(3)$, $2^{11}3^{2}(1)$&189&682&42&1(38), 2(4)
&$ \ZZ^{2} \oplus \ZZ_{2}^{3}$\\
\hline
8&501&$2^{2}(397)$, $2^{3}(67)$, $2^{2}3^{1}(4)$, $2^{4}(18)$, $2^{3}3^{1}(4)$, $2^{5}(5)$, $2^{4}3^{1}(1)$, $2^{6}(3)$, $2^{7}(1)$, $2^{5}3^{2}(1)$&435&2972&145&1(142), 2(3)
&$ \ZZ_{2}^{2}\oplus\ZZ_{4} $\\
\hline
9&704&$2^{2}(603)$, $2^{3}(58)$, $2^{2}3^{1}(6)$, $2^{4}(15)$, $2^{2}5^{1}(1)$, $2^{3}3^{1}(7)$, $2^{5}(6)$, $2^{4}3^{1}(2)$, $2^{6}(1)$, $2^{3}3^{2}(1)$, $2^{7}(1)$, $2^{5}3^{1}5^{1}(1)$, $2^{8}3^{2}(1)$, $2^{9}3^{1}(1)$&639&5928&290&1(287), 2(2), 4(1)
&$ \ZZ \oplus \ZZ_{2}^{4}\oplus\ZZ_{4}$\\
\hline
10&628&$2^{2}(571)$, $2^{3}(31)$, $2^{2}3^{1}(4)$, $2^{4}(13)$, $2^{3}3^{1}(3)$, $2^{5}(1)$, $2^{6}(2)$, $2^{3}3^{2}(1)$, $2^{7}(1)$, $2^{8}(1)$&597&6701&348&1(343), 2(4), 4(1)
&$ \ZZ \oplus \ZZ_{2}^{7}\oplus\ZZ_{4}^{2}\oplus\ZZ_{8}\oplus\ZZ_{24}$\\
\hline
11&369&$2^{2}(320)$, $2^{3}(25)$, $2^{2}3^{1}(4)$, $2^{4}(12)$, $2^{3}3^{1}(4)$, $2^{5}(3)$, $2^{6}3^{2}(1)$&346&4544&248&1(237), 2(7), 4(2), 8(1), 24(1)
&0\\
\hline
12&130&$2^{2}(103)$, $2^{3}(9)$, $2^{2}3^{1}(8)$, $2^{4}(3)$, $2^{3}3^{1}(1)$, $2^{5}(2)$, $2^{2}3^{2}(1)$, $2^{4}3^{1}(1)$, $2^{6}(1)$, $2^{5}3^{1}(1)$&120&1787&98&1(98)
&$ \ZZ \oplus \ZZ_{2}\oplus\ZZ_{8}$\\
\hline
13&31&$2^{2}(13)$, $2^{6}3^{1}(1)$, $2^{3}(7)$, $2^{2}3^{1}(4)$, $2^{4}(1)$, $2^{3}3^{1}(2)$, $2^{6}(1)$, $2^{5}3^{2}(1)$, $2^{9}(1)$&22&337&21&1(19), 2(1), 8(1)
&$ \ZZ_{5} $\\
\hline
14&7&$2^{3}3^{1}(2)$, $2^{4}(2)$, $2^{5}3^{1}5^{1}(1)$, $2^{2}5^{1}(1)$, $2^{7}3^{1}(1)$&2&3&1&5(1)
&$ \ZZ_{128} $\\
\hline
15&2&$2^{10}3^{2}5^{1}(1)$, $2^{11}3^{1}(1)$&2&2&1&128(1)
&$ \ZZ $\\
\hline
\end{tabular}
\end{table}

\clearpage

\bibliographystyle{amsplain_initials_eprint}
\bibliography{HomolArith}

\end{document}

